\documentclass[oneside,a4paper,11pt,notitlepage]{article}
\usepackage[left=0.8in, right=0.8in, top=1.5in, bottom=1.5in]{geometry}
\usepackage{pifont}
\usepackage{makecell}
\usepackage[T1]{fontenc} 
\usepackage[utf8]{inputenc} 
\usepackage[english]{babel}
\usepackage{lipsum} 
\usepackage{lmodern}
\usepackage{amssymb}
\usepackage{amsthm}
\usepackage{bm}
\usepackage{mathtools}
\usepackage{braket}
\usepackage{esint}
\newcommand{\abs}[1]{{\left|#1\right|}}

\usepackage{tabularx}
\usepackage{fontawesome}
\usepackage{booktabs}
\usepackage{graphicx}
\usepackage{tikz}
\usetikzlibrary{patterns}
\usepackage{multicol}
\usepackage{caption}
\usepackage{enumerate}
\usepackage{multicol}
\usepackage[skins,theorems]{tcolorbox}
\tcbset{highlight math style={enhanced,
		colframe=black,colback=white,arc=0pt,boxrule=1pt}}
\def\XXint#1#2#3{{\setbox0=\hbox{$#1{#2#3}{\int}$}
    \vcenter{\hbox{$#2#3$}}\kern-.5\wd0}}

\theoremstyle{definition}
\newtheorem{definizione}{Definition}[section]
\theoremstyle{plain}
\newtheorem{teorema}{Theorem}[section]

\newtheorem{prop}[teorema]{Proposition}
\newtheorem{corollario}[teorema]{Corollary}
\theoremstyle{definition}
\newtheorem{esempio}{Example}[section]
\newtheorem{oss}[esempio]{Remark}
\newtheorem{congettura}[esempio]{Conjecture}

\DeclareMathOperator{\R}{\mathbb{R}}

\DeclareMathOperator{\diam}{\, \textup{diam}}

\makeatletter
\newcommand{\myfootnote}[2]{\begingroup
	\def\@makefnmark{}%
	\addtocounter{footnote}{-1}%
	\footnote{\textbf{#1} #2}
	\endgroup}
\makeatother

\usepackage{soul}

\definecolor{OliveGreen}{rgb}{0,0.6,0}

\usepackage{hyperref}
\hypersetup{linktoc=none, bookmarksnumbered, colorlinks=true, linkcolor=red}

 \title{The Makai inequality in higher dimensions: qualitative and quantitative aspects}
\author{V. Amato, N. Gavitone, R. Sannipoli}
\date{}

\newcommand{\Addresses}{{
  \bigskip 
   \footnotesize 
 \noindent \textit{E-mail address}, V.~ Amato: \texttt{v.amato@ssmeridionale.it} 
  
   \medskip 
 
  \noindent\textsc{Mathematical and Physical Sciences for Advanced Materials and Technologies, Scuola Superiore Meridionale, Largo San Marcellino 10, 80138 Napoli, Italy. }

 \medskip

 \noindent  \textit{E-mail address}, N.~Gavitone: \texttt{nunzia.gavitone@unina.it} 
   
 \medskip
\noindent  \textsc{Dipartimento di Matematica e Applicazioni ``R. Caccioppoli'', Universit\`a degli studi di Napoli Federico II, Via Cintia, Complesso Universitario Monte S. Angelo, 80126 Napoli, Italy.}
  \medskip
 
 \noindent \textit{E-mail address}, R.~Sannipoli: \texttt{rossano.sannipoli@fjfi.cvut.cz} 
  
     \medskip 
\noindent\textsc{Department of Mathematics, Faculty of Nuclear Sciences and Physical Engineering, Czech Technical University in Prague, Trojanova 13, 120 00, Prague, Czech Republic.}

\par\nopagebreak 

}} 
\makeatletter
\def\Cline#1#2{\@Cline#1#2\@nil}
\def\@Cline#1-#2#3\@nil{%
  \omit
  \@multicnt#1%
  \advance\@multispan\m@ne
  \ifnum\@multicnt=\@ne\@firstofone{&\omit}\fi
  \@multicnt#2%
  \advance\@multicnt-#1%
  \advance\@multispan\@ne
  \leaders\hrule\@height#3\hfill
  \cr}
\makeatother

\definecolor{verde}{RGB}{20,150,100}
\definecolor{purple}{RGB}{200,30,200}

\begin{document}

\maketitle
\begin{abstract} 
  In this paper, given a convex, bounded, open set $\Omega \subset \R^n$ we prove a sharp inequality involving the Laplacian torsional rigidity  and both the perimeter and the measure  of the domain. 
  Our result generalizes to arbitrary dimensions the inequality established by Makai in the plane which, as conjectured in \cite{buttazzo2020convex}. Furthermore, we establish quantitative  estimates that provide key insights into the geometric structure and the thickness of the underlying optimizing sequences.
  \\ \\
\textsc{MSC 2020:} 49Q10, 35J05, 35J25.\\
\textsc{Keywords:} Makai inequality, torsional rigidity, quantitative inequalities, thin domains.

\end{abstract}

\section{Introduction}
Let $n \geq 2$. We denote by $\mathcal{K}_n$ the class of all non-empty, open, bounded, and convex subsets of $\mathbb{R}^n$. Given $\Omega \in \mathcal{K}_n$, in this paper we study the maximization problem 
\begin{equation*}
\sup\{{\mathcal{F}(\Omega),\;\; \Omega \in \mathcal{K}_n}\},
\end{equation*}
where $\mathcal{F}:\mathcal{K}_n\to\mathbb R^+$ is the following scaling invariant functional
\begin{equation}
\label{mak_func}
\mathcal F(\Omega)=\frac{T(\Omega) P^2(\Omega)}{|\Omega|^3}.
\end{equation}
Here $P(\Omega)$ and $|\Omega|$ denote the perimeter and the Lebesgue measure of $\Omega$, respectively, and $T(\Omega)$ is the torsional rigidity of the Laplacian in $\Omega$, defined by
$$
T(\Omega) =  \max_{\substack{\varphi \in H^1_0(\Omega) \\ \varphi \not\equiv 0}} 
\frac{\displaystyle\left(\int_\Omega \varphi \, dx \right)^2}{\displaystyle \int_\Omega |\nabla \varphi|^2 \, dx}.
$$
The scaling invariance of $\mathcal F$ comes from these properties valid for every $t>0$
\[
T(t\Omega)= t^{n+2}T(\Omega), \qquad P(t\Omega) =t^{n-1}P(\Omega), \qquad |t\Omega|=t^n |\Omega|.
\]

In general, torsional rigidity is not explicitly computable, except for highly symmetric domains; this motivates the derivation of estimates in terms of other geometric quantities in order to better understand its behavior. Pioneering contributions in this direction are due to Pólya and Makai, who introduced different scaling invariant functionals capturing the interplay between torsion and the geometry of the domain, such as $\mathcal F(\Omega)$.

As regards the minimization problem for $\mathcal F$, Pólya proved in \cite{polya1960}, in the planar case, the following lower bound:
\begin{equation}
\label{polyatorsion}
\frac{1}{3} \leq \frac{T(\Omega) P^2(\Omega)}{|\Omega|^3}.
\end{equation}
Moreover, he showed that the bound is asymptotically attained by sequences flattening rectangles. This inequality was later extended to higher dimensions in \cite{gavitone_2014}, where it was proved for every $\Omega \in \mathcal{K}_n$ and shown to be asymptotically attained by sequences of flattening cylinders, i.e., cylinders whose height tends to zero.\\ 
We now turn to the maximization problem. Makai proved in \cite{makai}, in the planar case, the following sharp upper bound for $\mathcal F$ within the class $\mathcal{K}_2$:
\begin{equation}
\label{makaitorsion}
\frac{T(\Omega) P^2(\Omega)}{|\Omega|^3} \leq \frac{2}{3}.
\end{equation}
Moreover, he showed that the inequality is optimal, in the sense that equality is asymptotically attained along sequences of flattening triangles. In dimension two, several extensions of this result have been obtained, also in nonlinear settings (see for instance \cite{dellapietra_gavitone2018,fragala_gazzola_lamboley2013,HLP2018}). In \cite{buttazzo2020convex}, the authors formulate the following conjecture for the sharp upper bound in arbitrary dimension:
\begin{congettura}\label{congettura}
\[
\sup\{\mathcal F(\Omega) : \Omega \in \mathcal{K}_n\}
= \frac{2n^2}{(n+1)(n+2)}.
\]
\end{congettura}
They also provide partial evidence supporting the conjecture by analyzing suitable families of thin domains. In particular, they are able to show that sequences of flattening cones attain this exact value.
\\
We are now in a position to state the first main result of this paper, which confirms the above conjecture.

\begin{teorema}\label{thm:main}
Let $n \geq 2$ and let $\Omega\in \mathcal{K}_n$. Then
\begin{equation*}
\mathcal F(\Omega) \leq \frac{2n^2}{(n+1)(n+2)}.
\end{equation*}
Moreover, the constant is sharp and is asymptotically achieved, for instance, by a sequence of flattening cones.
\end{teorema}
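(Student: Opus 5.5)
The plan is to use the web-function (distance-function) method, as in Makai's planar proof and in the analogous estimates for the first Dirichlet eigenvalue. Let $d(x)=\mathrm{dist}(x,\partial\Omega)$, let $R$ be the inradius, and for $t\in[0,R]$ set $\Omega_t=\{d>t\}$. Write $\mu(t)=|\Omega_t|$ and $P(t)=P(\Omega_t)=-\mu'(t)$. Since $T(\Omega)$ is a maximum, testing with functions of the form $\varphi=g(d(x))$, $g(0)=0$, $g'\ge 0$, gives a lower bound. That is the wrong direction for us, so we cannot bound $T$ from above by trial functions.

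Instead I would use the \emph{dual} formulation: for every vector field $\sigma$ with $-\mathrm{div}\,\sigma=1$ in $\Omega$ we have $T(\Omega)\le\int_\Omega|\sigma|^2\,dx$ (Thomson principle). The natural choice is $\sigma=h(d)\,\nabla d$ with $h$ chosen so that $-\mathrm{div}\,\sigma\ge 1$. For convex $\Omega$, $d$ is concave, so $-\Delta d\ge 0$ as a measure. Requiring $-\mathrm{div}\,\sigma=-h'(d)-h(d)\Delta d\ge 1$ in the coarea-averaged sense leads to $h(t)=\mu(t)/P(t)$, which is the flux balance over $\Omega_t$. One then obtains
\[
T(\Omega)\le \int_0^R \frac{\mu(t)^2}{P(t)}\,dt .
\]
Rigorously, I would justify this level set by level set: the $\Omega_t$ are convex, $\int_{\partial\Omega_t}|\nabla u|\ge\mu(t)$ for the torsion function $u$, and one combines this with an argument in the spirit of Pólya–Makai. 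Alternatively, one can invoke the known bound $T\le\int_0^R\mu^2/P\,dt$ for convex sets, which is the $n$-dimensional analogue of Makai's estimate.

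Next I would use the Brunn–Minkowski inequality for inner parallel sets. The function $t\mapsto P(t)^{1/(n-1)}$ is concave, and more precisely, for convex sets, $P(t)\ge P(\Omega)(1-t/R)^{n-1}$. Using $\mu(t)=\int_t^R P(s)\,ds$, I would parametrize by $p(t)=P(t)/P(\Omega)$ and reduce the claim to the one-dimensional variational inequality
\[
\int_0^R\frac{\left(\int_t^R P\right)^2}{P}\,dt\;\le\;\frac{2n^2}{(n+1)(n+2)}\,\frac{\left(\int_0^R P\right)^3}{P(0)^2},
\]
for nonincreasing $P$ with $P^{1/(n-1)}$ concave. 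The extremal case should be $P(t)=P(0)(1-t/R)^{n-1}$, which is exactly the profile of a cone whose base carries essentially all the perimeter. A direct check: with $\mu(t)=\frac{P(0)R}{n}(1-t/R)^n$, the left side equals $\frac{P(0)R^3}{n^2(n+2)}$ and the right side equals $\frac{2n^2}{(n+1)(n+2)}\cdot\frac{P(0)R^3}{n^3}$. This shows the inequality holds in the extremal case but is not tight there, so the constant must come from a more careful bound; perhaps the factor comes from using $|\Omega|\ge\dots$ together with $P(0)R\ge|\Omega|$ differently. The \textbf{main obstacle} is this one-dimensional optimization: proving that, under the concavity constraint, the ratio is maximized by the power profile. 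I would handle it by a change of variables $s=\mu(t)$ and by comparison/rearrangement against the extremal profile, using concavity of $P^{1/(n-1)}$ to show that $\mu^2/P$ is dominated pointwise in the correct normalization.

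Sharpness follows from the flattening cones of \cite{buttazzo2020convex}, which the introduction says attain the value asymptotically. To confirm it, take $C_\varepsilon=\{(x',x_n): x'\in B, 0<x_n<\varepsilon(1-|x'|)\}$. For thin domains, $T\approx\int_B \frac{h(x')^3}{12}\,dx'$, $P\approx 2|B|$, and $|C_\varepsilon|\approx\int_B h$. Computing with $h=\varepsilon(1-|x'|)$ gives the constant $\frac{2n^2}{(n+1)(n+2)}$.
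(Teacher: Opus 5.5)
\textbf{There is a genuine gap: your first step is false.} The inequality $T(\Omega)\le\int_0^R\mu(t)^2/P(t)\,dt$ does not hold, and it is not a known result; in fact the reverse inequality always holds. Your field is $\sigma=h(d)\nabla d=\nabla\varphi$ with $\varphi=g(d)$ and $g'=\mu/P$. This $\varphi$ is exactly the optimal web function. For $\varphi=g(d)$ with $g(0)=0$, the coarea formula and an integration by parts give
\[ \int_\Omega\varphi\,dx=\int_0^R g'\mu\,dt,\qquad \int_\Omega|\nabla\varphi|^2\,dx=\int_0^R (g')^2P\,dt. \]
By Cauchy--Schwarz, the best Rayleigh quotient among functions of $d$ is therefore $\int_0^R\mu^2/P\,dt$, so $T(\Omega)\ge\int_0^R\mu^2/P\,dt$.

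The Thomson principle needs $-\mathrm{div}\,\sigma\ge1$ pointwise, because it is tested against the torsion function $u\ge0$, and $u$ is not a function of $d$. Your $\sigma$ only satisfies the flux balance $\int_{\Omega_t}(-\mathrm{div}\,\sigma)\,dx=|\Omega_t|$. On a polytope, for example, $\Delta d=0$ off the ridge, so $-\mathrm{div}\,\sigma=1+\mu P'/P^2<1$ in the bulk. For a concrete counterexample, take any triangle. It is a tangential body, so $\mu(t)=|\Omega|(1-t/R)^2$, $P(t)=P(\Omega)(1-t/R)$ and $P(\Omega)R=2|\Omega|$, giving $\int_0^R\mu^2/P\,dt=|\Omega|^3/(2P(\Omega)^2)$. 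For the equilateral triangle, however, $T=\frac35|\Omega|^3/P(\Omega)^2$.

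Your own computation already shows the problem. Along flattening cones, $\frac{P(\Omega)^2}{|\Omega|^3}\int_0^R\mu^2/P\,dt\to\frac{n}{n+2}<\frac{2n^2}{(n+1)(n+2)}$, so your upper bound contradicts the sharpness you verify in your last paragraph. Separately, the one-dimensional optimization you identify as the main obstacle is left unproved.

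\textbf{What is needed instead} is an upper bound on $T$ that is asymptotically exact on thin domains. The paper uses $T(\Omega)\le\int_\Omega d(x,\partial\Omega)^2\,dx=\int_0^R t^2P(t)\,dt$. It proves this for polytopes by splitting $\Omega$ into the regions $E_i$ where $d$ is the distance to the facet $F_i$, dropping the tangential derivatives of $u$, and applying $\left(\int_0^s g\right)^2\le\frac{s^3}{3}\int_0^s (g')^2$ (for $g(0)=0$) along each normal segment; general convex sets follow by approximation. For this functional the profile $P(t)=P(\Omega)(1-t/R)^{n-1}$ gives exactly $\frac{2n^2}{(n+1)(n+2)}$.

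The one-dimensional step then compares the concave $L=P^{1/(n-1)}$ with the linear $\lambda(t)=L(0)-at$ chosen so that $\int_0^R\lambda^{n-1}\,dt=|\Omega|$. The difference $L-\lambda$ changes sign once, at some $\beta$, so $\int_0^R(L^{n-1}-\lambda^{n-1})(t^2-\beta^2)\,dt\le0$. This yields $\int_0^R t^2P\,dt\le\int_0^R t^2\lambda^{n-1}\,dt$, and explicit integration reduces the claim to a polynomial inequality in $z=1-aR/L(0)\in[0,1]$. Your sharpness computation with flattening cones is correct and agrees with the paper's.
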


Since Briani, Buttazzo, and Prinari \cite{buttazzo2020convex} show that the constant is attained by sequences of flattening cones, once we prove the inequality it is immediately sharp.

As stated in Theorem \ref{thm:main}, the optimal constant is achieved by a particular sequence of sets in $\mathcal{K}_n$. A natural question that could arise is whether the supremum of $\mathcal{F}$ is actually a maximum. The answer to this question is negative and it is explained in the next result. To state it, we need to introduce a remainder term already introduced in different papers (see for instance \cite{ABF,ABF2, AGS,AMPS}), that is the following
\begin{equation}\label{alpha}
 \alpha(\Omega) := \frac{w_\Omega}{\diam(\Omega)},
\end{equation}
where $w_\Omega$ and $\diam(\Omega)$ are respectively the minimal width and the diameter of $\Omega$ (see Section \ref{sec2} for the precise definition). We emphasize that the sets for which $\alpha(\Omega)\to 0$ identifies the class of so-called \textit{thinning domains} (see Section \ref{sec2}). Our second main result in this direction is the following.
\begin{teorema}\label{thm:supremum}
 Let $n\ge 2$. If $\{\Omega_k\}_{k\in \mathbb N}\subset K_n$  is a maximizing sequence for $\mathcal{F}(\Omega)$, i.e. such that 
    \begin{equation*}\label{eq:SharpMakai}
        \lim_{k\to +\infty}\mathcal{F}(\Omega_k)= \frac{2n^2}{(n+1)(n+2)},
    \end{equation*}
then we get
    $$\lim_{k\to+\infty} \alpha(\Omega_k)=0.$$
    \end{teorema}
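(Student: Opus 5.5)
We argue by contradiction, using compactness in the Hausdorff topology together with the fact that the supremum in Theorem \ref{thm:main} is never attained by a genuine convex body. Suppose that $\{\Omega_k\}$ is a maximizing sequence with $\alpha(\Omega_k)\not\to 0$. After passing to a subsequence we may assume $\alpha(\Omega_k)\ge \alpha_0>0$ for every $k$. Since $\mathcal F$ is invariant under scaling and translations, we normalize so that $\diam(\Omega_k)=1$ and all $\Omega_k$ lie in a fixed ball. Then the minimal width satisfies $w_{\Omega_k}\ge\alpha_0$. By Blaschke's selection theorem a further subsequence converges in the Hausdorff metric to a compact convex set $K$. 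Minimal width is continuous under Hausdorff convergence, so $w_K\ge\alpha_0>0$. Hence $K$ has nonempty interior, and its interior $\Omega$ belongs to $\mathcal K_n$.

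Next we use continuity of the three quantities along Hausdorff-convergent sequences of convex bodies with nonempty interior limit. Perimeter (a quermassintegral) and volume are continuous. For torsion we use the classical convergence of solutions of $-\Delta u=1$ with Dirichlet data on convex domains converging in the Hausdorff sense: convex domains satisfy a uniform exterior cone condition, so $\gamma$-convergence, or simply the Šverák/Mosco argument, gives $T(\Omega_k)\to T(\Omega)$. It follows that $\mathcal F(\Omega)=\lim_k\mathcal F(\Omega_k)=\frac{2n^2}{(n+1)(n+2)}$, so $\Omega$ would be a genuine maximizer.

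The main step, and the main obstacle, is to show that no $\Omega\in\mathcal K_n$ attains equality in Theorem \ref{thm:main}. The plan is to revisit the proof of Theorem \ref{thm:main}. Presumably it proceeds by a chain of inequalities, for instance a web-function or inner-parallel-set argument. In that argument one bounds $T$ from above via level sets of the distance function, or via a test function depending on $d(x,\partial\Omega)$, and then uses Brunn–Minkowski-type estimates on the inner parallel sets $\Omega_t$. These estimates would be $P(\Omega_t)\le P(\Omega)$, together with a concavity estimate on $|\Omega_t|$ in terms of $t/r_\Omega$ of cone type: $|\Omega_t|\ge |\Omega|(1-t/r)^n$.

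Equality in those steps forces rigidity. Equality in the polynomial bound on the distribution function of the distance forces $\Omega$ to be a "cone-like" body with respect to its inradius, that is, all inner parallel sets homothetic. Simultaneously, the torsion function would have to be a function of the distance alone, which happens only for slabs or other degenerate configurations, and these are incompatible with boundedness and positive width. Concretely, I would show that one of the inequalities in the chain, typically the step replacing the true torsion function by the web function or the step $P(\Omega_t)\le P(\Omega)$ used with equality for all $t$, is strict for every bounded convex open set. If the paper's proof instead proceeds by an asymptotic argument in which the constant only emerges in the thin limit, I would add a remainder term: an estimate $\mathcal F(\Omega)\le \frac{2n^2}{(n+1)(n+2)}-c\,\phi(\alpha(\Omega))$ with $\phi>0$ on $(0,1]$. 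This would yield the claim directly, without compactness, and is likely what the quantitative part of the paper establishes.

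Either way, strictness on $\mathcal K_n$ contradicts $\mathcal F(\Omega)=\frac{2n^2}{(n+1)(n+2)}$. Therefore $\alpha(\Omega_k)\to0$ along the whole sequence, since every subsequence has a further subsequence along which $\alpha\to 0$.
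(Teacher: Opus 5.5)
\textbf{Gap: the non-attainment step.} Your compactness part (normalizing $\diam(\Omega_k)=1$, Blaschke selection, positive width of the limit, continuity of $T$, $P$ and $\abs{\cdot}$) is sound and matches the paper's scheme. The gap is the step you yourself call the main obstacle: showing that no $\Omega\in\mathcal K_n$ has $\mathcal F(\Omega)=\frac{2n^2}{(n+1)(n+2)}$. You only sketch it, and the mechanisms you propose do not work.

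The proof of Theorem \ref{thm:main} is the chain
\[
\mathcal F(\Omega)\le \frac{P(\Omega)^2}{\abs{\Omega}^3}\int_\Omega d(x,\partial\Omega)^2\,dx\le \frac{2n^2}{(n+1)(n+2)}.
\]
The second, purely geometric, inequality is an \emph{equality} on every tangential body. For the unit ball, $\int_{B_1}d^2\,dx=n\omega_n\int_0^1t^2(1-t)^{n-1}\,dt=\frac{2\omega_n}{(n+1)(n+2)}$, which gives exactly $\frac{2n^2}{(n+1)(n+2)}$; this is the case $\gamma(\Omega)=0$ of Proposition \ref{prop:quantitative}. So forcing ``all inner parallel sets homothetic'' yields no contradiction, since such bodies exist with $\alpha=1$.

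Your heuristic for the torsion step is also false as stated. On the ball, $v=\frac{1-\abs{x}^2}{2n}=\frac{2d-d^2}{2n}$ is a function of $d$ alone, and the ball is not degenerate. Your fallback, a remainder $\phi(\alpha(\Omega))$, is just a stronger restatement of what must be proved. The paper's quantitative result does not provide it: its remainder is $\gamma(\Omega)$, which vanishes on the ball. Also, a test function built from $d$ gives a lower bound for $T$, not an upper one.

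\textbf{The missing ingredient.} What is actually needed is that the first inequality, $T(\Omega)\le\int_\Omega d(x,\partial\Omega)^2\,dx$ from Proposition \ref{prop:TlessD}, is \emph{strict} for every $\Omega\in\mathcal K_n$. This is Crasta's result, quoted in Remark \ref{crasta<}, and it is the whole content of the non-attainment step. With it your argument closes at once: the limit body satisfies $\mathcal F(\Omega)<\frac{P(\Omega)^2}{\abs{\Omega}^3}\int_\Omega d(x,\partial\Omega)^2\,dx\le\frac{2n^2}{(n+1)(n+2)}$, a contradiction.

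The paper runs the same contradiction on the deficit $\bigl(\int_{\Omega_k}d(x,\partial\Omega_k)^2\,dx-T(\Omega_k)\bigr)P(\Omega_k)^2/\abs{\Omega_k}^3\to0$, which additionally uses Hausdorff continuity of $\int_\Omega d^2\,dx$. Your non-attainment formulation is slightly cleaner, but both hinge on Crasta's strict inequality, which your proposal does not supply.
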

Clearly Theorem \ref{thm:supremum} implies that the supremum of $\mathcal{F}(\Omega)$ is not attained by any set. Indeed if there existed a $\tilde{\Omega}\in \mathcal{K}_n$, such that $\mathcal{F}(\tilde{\Omega})= \frac{2n^2}{(n+1)(n+2)}$, then we could choose as a maximizing sequence the constant one $\Omega_k = \tilde\Omega$, for all $k\in \mathbb N$. Applying Theorem \ref{thm:supremum}, we would have $\alpha(\tilde\Omega)=0$ and therefore that the interior of $\tilde\Omega$ is empty, but this is not allowed in $\mathcal{K}_n$.\\

Actually, as proved in \cite{AGS}, not every thinning domain behaves in the same way and then it is possible to introduce different thickness parameters in order to give a finer description of minimizing sequences. More precisely, in \cite[subsection $2.4$]{AGS}, we introduced the thickness parameters
\begin{equation}
\label{beta}
\beta(\Omega):=\frac{P(\Omega)R_\Omega}{|\Omega|}-1 \in(0,n-1].
\end{equation}  
In particular, in \cite[Proposition $1.1$]{AGS}, we proved that there exists a positive dimensional constant $C_0=C_0(n)$, such that
\begin{equation*}
    \beta(\Omega)\ge C_0\alpha(\Omega),
\end{equation*}
while the converse inequality cannot be true, since there are sequences of thinning domains for which the functional $\beta(\Omega)$
is not converging to zero (for instance a sequence of flattening cones).
What we proved in \cite{AGS} is the following inequalities for the minimization problem of $\mathcal{F}(\Omega)$
\begin{equation}\label{eq:TPM-PRM}
\frac{n+1}{3}\beta(\Omega)\ge 
\mathcal{F}(\Omega)-\frac{1}{3} \ge \frac{1}{2^3\cdot 3^4 n^3} \beta(\Omega)^3.
\end{equation}    
This result, which is a continuity result and a quantitative versione of the P\'olya inequality, provides a precise characterization of the thinning minimizing sequences for inequality \eqref{polyatorsion} in terms of the purely geometric quantity $\beta(\Omega)$, providing a more detailed description of the thickness.\\

Motivated by the quantitative approach described above, the second aim of this paper is to characterize the maximing sequence of $\mathcal{F}(\Omega)$, investigating whether the deficit from the optimal value can be related to suitable geometric quantities. Theorem \ref{thm:supremum} guarantees that the maximing sequence must be thinning and then, as in the spirit of \eqref{eq:TPM-PRM}, we introduce a new  remainder term, in order to give a more detailed characterization of the optimizing sequence, that is the following
\begin{equation}\label{gamma}
    \gamma(\Omega) = n-\frac{P(\Omega)R_\Omega}{\abs{\Omega}}\in [0,n-1).
\end{equation}
We stress that $\gamma(\Omega)= 0$ on tangential bodies and that $\gamma(\Omega)\to n-1$ when $\beta(\Omega) \to 0$, i.e. on sequences of thinning cylinders (see Section \ref{sec2} for the details).\\ 
In this spirit, we establish a quantitative version of Theorem \ref{thm:main}, that is the following.

\begin{teorema}\label{thm:quantitative}
    Let $n\ge 2$ and let $\Omega \in \mathcal{K}_n$. Then,
    \begin{equation*}
     \frac{2n^2}{(n+1)(n+2)}-\mathcal{F}(\Omega)\geq C_1\gamma(\Omega)^n
    \end{equation*}
      where
    \begin{equation*}
        C_1(n) = \frac{n^2+3n-4}{n(n+1)(n+2)(n-1)^{n}}.
    \end{equation*}
\end{teorema}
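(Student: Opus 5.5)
The plan is to split the argument into an analytic step and a geometric step. The analytic step bounds $T(\Omega)$ by an integral of the distance function $d(x)=\mathrm{dist}(x,\partial\Omega)$. The geometric step bounds that integral in terms of $|\Omega|$, $P(\Omega)$ and $R_\Omega$, keeping track of the remainder $\gamma(\Omega)$.

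\emph{Analytic step.} I will aim for
\begin{equation*}
T(\Omega)\le \int_\Omega d(x)^2\,dx .
\end{equation*}
Heuristically this is right: on a thin slab of thickness $h$ one has $\int_0^h \min(s,h-s)^2\,ds=h^3/12$, which is exactly the one-dimensional torsion. To prove it I would use the dual formulation
\begin{equation*}
T(\Omega)=\min\Big\{\int_\Omega|\sigma|^2 : -\mathrm{div}\,\sigma=1 \text{ in } \Omega\Big\}.
\end{equation*}
I would construct $\sigma$ along the normal segments that join $\partial\Omega$ to the ridge (cut locus) of $d$, as in Makai's planar argument. On each segment $\sigma$ solves a one-dimensional divergence equation, weighted by the Jacobian of the normal-coordinate map. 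Convexity makes this Jacobian nonincreasing along the segment, which should give $|\sigma(x)|\le d(x)$ pointwise. This is the step I expect to be the main obstacle, because the ridge set and the Jacobian must be handled carefully. If it fails, I would try a web-function or level-set comparison with $d$ instead.

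\emph{Geometric step.} Let $\Omega_t=\{d>t\}$, $\mu(t)=|\Omega_t|$ and $p(t)=P(\Omega_t)$, so that $\mu'=-p$. By the coarea formula,
\begin{equation*}
\int_\Omega d^2\,dx=\int_0^{R_\Omega} t^2 p(t)\,dt .
\end{equation*}
Convexity provides three constraints:
\begin{itemize}
\item $p$ is nonincreasing;
\item $p^{1/(n-1)}$ is concave, by Brunn--Minkowski applied to inner parallel sets;
\item $p(t)\ge P(\Omega)(1-t/R_\Omega)^{n-1}$, since $\Omega_t$ contains the homothetic copy of $\Omega$ with ratio $1-t/R_\Omega$ centred at the incentre.
\end{itemize}
Moreover $\int_0^{R_\Omega}p=|\Omega|$. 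Write $p(t)=P(\Omega)(1-t/R_\Omega)^{n-1}+e(t)$ with $e\ge 0$. The excess mass is then
\begin{equation*}
\int_0^{R_\Omega} e\,dt=|\Omega|-\frac{P(\Omega)R_\Omega}{n}=\frac{\gamma(\Omega)|\Omega|}{n}.
\end{equation*}
For the extremal profile $e\equiv 0$ (the tangential-body case), the computation $\int_0^R t^2P(1-t/R)^{n-1}dt$ with $R_\Omega=n|\Omega|/P(\Omega)$ returns exactly $\frac{2n^2}{(n+1)(n+2)}\frac{|\Omega|^3}{P(\Omega)^2}$. This proves Theorem~\ref{thm:main} once the analytic step is available.

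\emph{Quantitative step.} For Theorem~\ref{thm:quantitative} I would maximize $\int_0^{R} t^2 p(t)\,dt$ over all admissible $p$ with $P(\Omega)$, $R_\Omega$ and $|\Omega|$ fixed. I would then compare the maximum with $\frac{2n^2}{(n+1)(n+2)}|\Omega|^3/P^2$, which depends on $\gamma$ through $|\Omega|=P R/(n-\gamma)$. Adding the excess $e$ raises $\int t^2 p$ by at most roughly $R^2\gamma|\Omega|/n$. It also raises $|\Omega|^3$ by the factor $(n/(n-\gamma))^3$, and that increase should dominate. The constraints that $p\le P(\Omega)$ is monotone with $p^{1/(n-1)}$ concave force the worst admissible $e$ to be a profile whose deviation is felt at order $\gamma^n$. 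Optimizing over that one-parameter family, and using $\gamma\le n-1$ to absorb the higher-order terms, should produce the constant $C_1(n)$ with its factor $(n-1)^{-n}$.
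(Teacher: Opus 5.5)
Your architecture (first $T(\Omega)\le\int_\Omega d^2\,dx$, then a one-dimensional problem for the profile $p(t)=P(\Omega_t)$) is the paper's, but the step that carries the result is missing. Checking that $e\equiv0$ gives exactly $\frac{2n^2}{(n+1)(n+2)}\frac{|\Omega|^3}{P(\Omega)^2}$ does not prove Theorem~\ref{thm:main}. You must show that no admissible profile does better, and the estimate you sketch for $e$ fails. Using $\int_0^{R_\Omega}t^2e\,dt\le R_\Omega^2\gamma|\Omega|/n$ and $s=P(\Omega)R_\Omega/|\Omega|=n-\gamma$, you only get
\[
\frac{P(\Omega)^2}{|\Omega|^3}\int_\Omega d^2\,dx\le\frac{2s^3}{n(n+1)(n+2)}+\frac{s^2\gamma}{n}.
\]
Its derivative in $\gamma$ at $\gamma=0$ is $n-\frac{6n}{(n+1)(n+2)}>0$. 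So the bound exceeds the target for small $\gamma>0$, and the growth of $|\Omega|^3$ does not compensate. In fact the best admissible profile with a given $\gamma$ falls short of the target only by order $\gamma^n$, so any argument must be exact up to order $\gamma^{n-1}$. The concavity of $p^{1/(n-1)}$, which you list but never use, has to be exploited sharply.

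The paper does this as follows. With $L(t)=P(t)^{1/(n-1)}$, take the affine $\lambda(t)=L(0)-at$, $0\le a\le L(0)/R_\Omega$, with $\int_0^{R_\Omega}\lambda^{n-1}\,dt=|\Omega|$. Since $L-\lambda$ is concave and vanishes at $0$, it is $\ge0$ on some $[0,t_0]$ and $\le0$ afterwards. Hence $\int_0^{R_\Omega}(L^{n-1}-\lambda^{n-1})t^2\,dt\le t_0^2\int_0^{R_\Omega}(L^{n-1}-\lambda^{n-1})\,dt=0$. Everything then becomes explicit in $z=1-aR_\Omega/L(0)\in[0,1]$. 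One has $n|\Omega|/(P(\Omega)R_\Omega)=1+z+\dots+z^{n-1}$, and the deficit of the affine profile is $-\frac{n^2}{(n+1)(n+2)}\frac{z^nh(z)}{(1-z^n)^3}$, where $h(z)=2z^{2n}-6z^n-n(n+1)z^2+2n(n+2)z-(n^2+3n-4)$. This $h$ has a triple root at $z=1$ and satisfies $h(z)\le-(n^2+3n-4)(1-z)^3$.

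A caution for the final step: $n|\Omega|/(P(\Omega)R_\Omega)-1=\gamma/s\le\gamma$. So a lower bound on $z$ in terms of that quantity, as in the paper's last lines, does not give one in terms of $\gamma$. Use $\gamma=n(z-z^n)/(1-z^n)$ directly and keep the factor $(1-z)^3/(1-z^n)^3$ exact; this does yield $C_1$.

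Your analytic step also rests on a false claim, although the construction is sound. No field with $-\mathrm{div}\,\sigma=1$ can satisfy $|\sigma|\le d$ pointwise. That would force $\sigma\cdot\nu=0$ on $\partial\Omega$, whereas $\int_{\partial\Omega}\sigma\cdot\nu\,d\mathcal H^{n-1}=\int_\Omega\mathrm{div}\,\sigma\,dx=-|\Omega|$ for the outer normal $\nu$. For the natural field vanishing on the ridge, monotonicity of the Jacobian $J$ gives instead $|\sigma|=\frac{1}{J(s)}\int_s^\tau J\,dr\le\tau-s$ at distance $s$ from $\partial\Omega$, where $\tau$ is the length of the normal segment. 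This is the reflected quantity, largest exactly where $d$ is smallest. The comparison with $d^2$ holds only after integrating along each segment.

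On a polytope the repair is exact. There $J\equiv1$ on each cell $E_i=\{(x',x_n):x'\in F_i,\ 0<x_n<f_i(x')\}$, and your field is $(f_i(x')-x_n)e_n$. It vanishes on the ridge, so no singular divergence appears there. By the reflection $x_n\mapsto f_i(x')-x_n$, $\int_{E_i}|\sigma|^2=\int_{F_i}f_i^3/3=\int_{E_i}d^2$, and approximation gives $T(\Omega)\le\int_\Omega d^2\,dx$. This is the dual form of the paper's proof of Proposition~\ref{prop:TlessD}, which applies $(\int_0^s g)^2\le\frac{s^3}{3}\int_0^s(g')^2$ along the normal segments of each cell.
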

We point out that the quantitative estimate in Theorem \ref{thm:quantitative} does not involve the parameter $\alpha(\Omega)$. But Theorem \ref{thm:supremum} together with Theorem \ref{thm:quantitative} allow to characterize all the maximizing sequence for $\mathcal{F}(\Omega)$, as sequences for which both $\alpha(\Omega)$ and $\gamma(\Omega)$ go to zero, that is
\begin{corollario}
\label{Corol}
A maximizing sequence in $\mathcal{K}_n$ for $\mathcal{F}(\Omega)$  is a sequence of flattening tangential bodies.
\end{corollario}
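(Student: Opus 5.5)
The corollary follows by putting Theorem \ref{thm:supremum} and Theorem \ref{thm:quantitative} together. No new analysis is needed; the work is in saying precisely what ``flattening tangential bodies'' means. I read it as: a sequence with $\alpha(\Omega_k)\to 0$ (thinning) and $\gamma(\Omega_k)\to 0$, that is, $P(\Omega_k)R_{\Omega_k}/|\Omega_k|\to n$. Since $\gamma=0$ exactly when the equality $|\Omega|=\frac{1}{n}P(\Omega)R_\Omega$ holds, which characterizes tangential bodies, this says the $\Omega_k$ are asymptotically tangential bodies in the scale-invariant sense. I would state this interpretation explicitly, recalling from Section \ref{sec2} that $\gamma(\Omega)\ge 0$ always, with equality on tangential bodies.

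Let $\{\Omega_k\}\subset\mathcal K_n$ satisfy $\mathcal F(\Omega_k)\to \frac{2n^2}{(n+1)(n+2)}$.

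\textbf{Step 1 (thinning).} Theorem \ref{thm:supremum} applies directly and gives $\alpha(\Omega_k)\to0$.

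\textbf{Step 2 (asymptotically tangential).} Theorem \ref{thm:quantitative} gives
\[
0\le C_1\gamma(\Omega_k)^n\le \frac{2n^2}{(n+1)(n+2)}-\mathcal F(\Omega_k)\longrightarrow 0 .
\]
The constant $C_1(n)$ is positive because $n^2+3n-4=(n+4)(n-1)>0$ for $n\ge2$. Since $\gamma(\Omega_k)\ge 0$, it follows that $\gamma(\Omega_k)\to0$. More quantitatively, $\gamma(\Omega_k)\le \bigl(\text{deficit}/C_1\bigr)^{1/n}$, which also gives a rate.

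\textbf{Step 3 (conclusion).} Combining Steps 1 and 2, $\Omega_k$ is a thinning sequence along which the defining identity of tangential bodies holds asymptotically. This is the claimed characterization.

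I would also remark that $\alpha\to0$ and $\gamma\to0$ are genuinely independent pieces of information. Flattening cylinders are thinning but have $\gamma\to n-1$. Conversely, a fixed tangential body such as a ball has $\gamma=0$ but is not thinning, and is not maximizing by Theorem \ref{thm:supremum}. The flattening cones of \cite{buttazzo2020convex} satisfy both conditions, consistent with Theorem \ref{thm:main}.

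The only delicate point is the non-negativity of $\gamma$, i.e.\ the inequality $P(\Omega)R_\Omega\le n|\Omega|$ for convex sets. It follows by decomposing $\Omega$ into cones from the incenter over $\partial\Omega$: each has height $h_\Omega(\nu)\ge R_\Omega$ relative to the incenter, so $|\Omega|=\frac1n\int_{\partial\Omega}h\,d\mathcal H^{n-1}\ge \frac1n R_\Omega P(\Omega)$. With this in hand, everything else is immediate.
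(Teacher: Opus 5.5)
Your proposal is correct and takes the same route as the paper. The paper obtains the corollary directly by combining Theorem~\ref{thm:supremum} (which gives $\alpha(\Omega_k)\to0$) with Theorem~\ref{thm:quantitative} (which gives $C_1\gamma(\Omega_k)^n\to0$, hence $\gamma(\Omega_k)\to0$), reading ``flattening tangential bodies'' as exactly the pair of conditions $\alpha\to0$ and $\gamma\to0$, as you do. Your cone-decomposition proof of $P(\Omega)R_\Omega\le n|\Omega|$ is a valid self-contained justification of $\gamma\ge0$, a fact the paper simply recalls in Section~\ref{sec2}.
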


Simultaneously and independently of our work, Pisante and Prinari  \cite{PP} established sharp inequalities for the Poincaré--Sobolev constants $\lambda_{p,q}(\Omega)$ in convex domains, which are closely related to the qualitative result of our paper.

\vspace{0.5cm}
\textbf{Plan of the paper:} In Section \ref{sec2}, we recall some basic notions and definitions and review some classical results, focusing in particular on the class of convex sets. In Section \ref{sec3}, we prove Theorem \ref{thm:main}. In Section \ref{sec4}, we further investigate the geometry of the optimizing sequences, proving Theorems \ref{thm:quantitative} and \ref{thm:supremum}.

\section{Notations and preliminary results}
\label{sec2}
\subsection{Notations and basic facts} 
 Throughout this article, $|\cdot|$ will denote the Euclidean norm in $\mathbb{R}^n$,
 while $(\,\cdot\,)$ is the standard Euclidean scalar product for  $n\geq2$. For $k\in [0,n)$, the $k-$dimensional Hausdorff measure is denoted by $\mathcal{H}^k(\cdot)$. We will denote by $B_r(x)$ the ball centered at the point $x\in \mathbb R^n$ with radius $r>0$; moreover, when the ball is centered at the origin we will write $B_r$, omitting the dependence on $x$, and when the radius of the ball is $1$, we will denote by $\mathbb{S}^{n-1}$ its boundary.  We denote by $\mathcal{K}_n$ the class of all non-empty, open, bounded, and convex subsets of $\mathbb{R}^n$. Let $\Omega \in \mathcal{K}_n$, its perimeter can be defined by
\[
P(\Omega) = \mathcal{H}^{n-1}(\partial \Omega).
\]

 We give now the definition of the support function of a convex set and minimal width (or thickness) of a convex set (see for instance \cite{schneider}).
\begin{definizione}\label{support}
  Let $\Omega\in \mathcal{K}_n$. The support function of $\Omega$ is defined as
  \begin{equation*}
    h_\Omega(y)=\sup_{x\in \Omega}\left(x\cdot y\right), \qquad y\in \mathbb{R}^n .
  \end{equation*}
\end{definizione}

\begin{definizione}
Let $\Omega\in \mathcal{K}_n$, the width of $\Omega$ in the direction $y \in \mathbb{S}^{n-1}$ is defined as 
  \begin{equation*}
    \omega_{\Omega}(y)=h_{\Omega}(y)+h_{\Omega}(-y)
    \end{equation*}
 and the minimal width of $\Omega$ as
\begin{equation*}
    w_\Omega=\min\{  \omega_{\Omega}(y)\,|\; y\in\mathbb{S}^{n-1}\}.
\end{equation*}
\end{definizione}
We will denote by $R_\Omega$ is the inradius of $\Omega$, i.e.
 \begin{equation}
     \label{inradiuss}
     R_\Omega=\sup\{r\in \mathbb R: B_r(x)\subset \Omega, x\in \Omega\},
 \end{equation}
and by $\diam(\Omega)$ the diameter of $\Omega$, that is
\begin{equation*}
    \diam(\Omega) = \sup_{x,y\in\Omega}|x-y|.
\end{equation*}

 \subsection{Distance function and inner parallel sets} \label{innere}
Let $\Omega\in \mathcal{K}_n$. We define the distance function from the boundary, and we will denote it by $ d(\cdot, \partial \Omega):\Omega \to [0,+\infty[$, as follows 
 \begin{equation*}
     d(x,\partial\Omega):=\inf_{y\in\partial\Omega}\abs{x-y},
 \end{equation*}
 and we call inradius, $R_\Omega$, its maximum.
 
We remark that the distance function is concave, as a consequence of the convexity of $\Omega$.
The superlevel sets of the distance function
\begin{equation*}
    \Omega_t=\set{x\in \Omega \, : \, d(x,\partial\Omega)>t}, \qquad t\in[0, R_\Omega]
\end{equation*}
 are called \emph{inner parallel sets}, and we use the following notations:
 \begin{equation*}
 \mu(t)= \abs{\Omega_t}, \qquad P(t)=P(\Omega_t)\qquad t\in[0, R_\Omega].
 \end{equation*}
 By coarea formula (see \cite{maggi2012sets}), recalling that $\abs{\nabla d}=1$, it is possible to prove that $\mu(t)$ is absolutely continuous, decreasing and its derivative is  
\begin{equation}\label{eq:dermu}
    \mu'(t)=-P(t)\qquad a.e.
\end{equation}
By the Brunn-Minkowski inequality (\cite[Theorem 7.4.5]{schneider}) and the concavity of the distance function, the map
\begin{equation*}
    t \mapsto P(t)^{\frac{1}{n-1}}
\end{equation*}
is concave in $[0,R_{\Omega}]$, hence absolutely continuous in $(0,R_{\Omega})$. Moreover, there exists its right derivative at $0$ and it is negative, since $P(t)^{\frac{1}{n-1}}$ is strictly monotone decreasing, hence almost everywhere differentiable. For the next definition see \cite{schneider}.
\begin{definizione}
Let $\Omega \in \mathcal{K}_n$. We will say that $\Omega$ is a tangential body if its inner parallel sets are homothetic to $\Omega$ itself, i.e. if $\Omega_{t}= (1-\frac{t}{R_\Omega}) \Omega $ for $t \in (0,R_\Omega)$.
\end{definizione}
The following proposition concerns inequalities that are implied by the concavity of the functions $\abs{\Omega}^{\frac1n}$ and $P(\Omega)^{\frac{1}{n-1}}$, which are concave by the Brunn-Minkowski inequality. 

\begin{prop}
\label{prop:concavitymeasure}
For every $t \in [0,R_\Omega]$, we have
\begin{equation}\label{eq:concavity}
\begin{split}
& \mu(t)\ge \left(1-\frac{t}{R_\Omega}\right)^{n}\abs{\Omega}\\
&P(t) \ge \left(1-\frac{t}{R_\Omega}\right)^{n-1} P(\Omega).
  \end{split}  
\end{equation}
Moreover the equality case is achieved by tangential bodies.

\end{prop}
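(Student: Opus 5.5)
We argue by concavity of the maps $t\mapsto \mu(t)^{1/n}$ and $t\mapsto P(t)^{1/(n-1)}$ on $[0,R_\Omega]$, comparing each with the linear function that has the same values at the endpoints. First we check the concavity of $\mu^{1/n}$. The distance function is concave, so for $t,s\in[0,R_\Omega]$ and $\lambda\in[0,1]$ we get $\lambda\Omega_t+(1-\lambda)\Omega_s\subseteq \Omega_{\lambda t+(1-\lambda)s}$. Indeed, if $d(x,\partial\Omega)>t$ and $d(y,\partial\Omega)>s$, then $d(\lambda x+(1-\lambda)y,\partial\Omega)>\lambda t+(1-\lambda)s$. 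The Brunn--Minkowski inequality $|A+B|^{1/n}\ge |A|^{1/n}+|B|^{1/n}$ then shows that $\mu^{1/n}$ is concave on $[0,R_\Omega]$. For the perimeter, concavity of $P(t)^{1/(n-1)}$ is exactly the fact already recalled above, obtained from the Brunn--Minkowski inequality for quermassintegrals (\cite[Theorem 7.4.5]{schneider}), since the perimeter is a multiple of $W_1$ and is monotone under inclusion.

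Next we apply concavity between the endpoints $0$ and $R_\Omega$. At $t=R_\Omega$ we have $\mu(R_\Omega)=0$, and $P(R_\Omega)\ge 0$; the latter value may be positive if the set of incenters is $(n-1)$-dimensional, in which case we read $P$ there by the continuity/concavity extension. Writing $t=(1-\tfrac{t}{R_\Omega})\cdot 0+\tfrac{t}{R_\Omega}\cdot R_\Omega$, concavity gives
\[
\mu(t)^{1/n}\ge \Big(1-\tfrac{t}{R_\Omega}\Big)\mu(0)^{1/n}+\tfrac{t}{R_\Omega}\,\mu(R_\Omega)^{1/n}\ge \Big(1-\tfrac{t}{R_\Omega}\Big)|\Omega|^{1/n}.
\]
Raising to the power $n$ gives the first inequality. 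The same argument with exponent $1/(n-1)$, dropping the nonnegative term $\tfrac{t}{R_\Omega}P(R_\Omega)^{1/(n-1)}$, gives the second.

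The equality case is a direct computation. If $\Omega$ is a tangential body, then $\Omega_t=(1-t/R_\Omega)\Omega$, so by homogeneity $\mu(t)=(1-t/R_\Omega)^n|\Omega|$ and $P(t)=(1-t/R_\Omega)^{n-1}P(\Omega)$.

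The only delicate step is the endpoint behaviour: the concavity of $P^{1/(n-1)}$ must hold up to $t=R_\Omega$, where $\Omega_{R_\Omega}$ is empty as an open set. We handle this by working on $[0,R_\Omega)$ and letting $t\to R_\Omega^-$; the limiting value of $P$ is nonnegative, and that is all the estimate uses.
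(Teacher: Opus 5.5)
Your proof is correct and follows essentially the same route as the paper: concavity of $\mu^{1/n}$ and $P^{1/(n-1)}$ on $[0,R_\Omega]$, comparison with the chord between $t=0$ and $t=R_\Omega$ (dropping the nonnegative endpoint term), and homogeneity for tangential bodies. You additionally justify the concavity of $\mu^{1/n}$ via the inclusion $\lambda\Omega_t+(1-\lambda)\Omega_s\subseteq\Omega_{\lambda t+(1-\lambda)s}$ plus Brunn--Minkowski, and you treat the endpoint $t=R_\Omega$ by a limit; the paper takes both details for granted.
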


\begin{proof}
Since the functions $\mu(t)^{\frac{1}{n}}$  $P(t)^{1/(n-1)}$ are concave in $[0,R_\Omega]$, then they are above the lines passing through the points $(0,P(\Omega)^{\frac{1}{n-1}})$ and $(R_\Omega, P(R_\Omega)^{\frac{1}{n-1}})$, that have respectively equations
\begin{equation*}
    y_m(t) = -\frac{\abs{\Omega}^{\frac{1}{n}}}{R_\Omega}t+\abs{\Omega}^{\frac{1}{n}}= \bigg(1-\frac{t}{R_\Omega}\bigg)\abs{\Omega}^{\frac{1}{n}},
\end{equation*}
and
\begin{equation*}
    y_p(t) = \frac{P(R_\Omega)^{\frac{1}{n-1}}-P(\Omega)^{\frac{1}{n-1}}}{R_\Omega}t+P(\Omega)^{\frac{1}{n-1}}\ge \bigg(1-\frac{t}{R_\Omega}\bigg)P(\Omega)^{\frac{1}{n-1}}.
\end{equation*}
Therefore we have
\begin{align*}
&\mu(t)^{\frac1n}\ge \bigg(1-\frac{t}{R_\Omega}\bigg)\abs{\Omega}^{\frac{1}{n}}\\
&P(t)^{\frac{1}{n-1}}
\ge
\left(1-\frac{t}{R_\Omega}\right) P(\Omega)^{\frac{1}{n-1}}.
\end{align*}
Raising both sides to the power $n$ and $n-1$ respectively yields \eqref{eq:concavity}.
\end{proof}
The following result gives an upper bound on the torsion in terms of the distance function. It was proved, for instance, in \cite{Cra2004}, but for the reader’s convenience we also include its proof. See also \cite{prinari2023sharp} for a more general inequality.
\begin{prop}
\label{prop:TlessD}
    Let $\Omega$ be a bounded, open and convex set of $\mathbb{R}^n$. Then
    \begin{equation}\label{eq:torsiondistancefunction}
        T(\Omega)\le \int_{\Omega}d(x,\partial\Omega)^2\,dx.
    \end{equation}
\end{prop}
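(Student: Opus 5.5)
The plan is to test the variational characterization of $T(\Omega)$ against web functions, i.e. functions of the distance from the boundary, and then optimize over the profile. Let $u$ be the torsion function, solving $-\Delta u = 1$ in $\Omega$ with $u=0$ on $\partial\Omega$. Then $T(\Omega)=\int_\Omega u\,dx=\int_\Omega|\nabla u|^2\,dx$. The claimed bound $T\le\int_\Omega d^2$ will follow from a pointwise or integrated comparison between $u$ and the distance function.

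The cleanest route uses $P$-functions, or equivalently the classical gradient bound for the torsion function on convex domains: $|\nabla u|^2+2u$ attains its maximum at a critical point of $u$, and convexity gives $|\nabla u|^2\le 2(\max u - u)$. I would prefer to avoid that machinery. Instead I will show the pointwise estimate
\[
|\nabla u(x)|\le d(x,\partial\Omega) \quad\text{for all } x\in\Omega .
\]
Then, by the torsion identity,
\[
T(\Omega)=\int_\Omega|\nabla u|^2\,dx\le\int_\Omega d^2\,dx,
\]
which is exactly \eqref{eq:torsiondistancefunction}.

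To prove the pointwise estimate, fix $x\in\Omega$ and a unit vector $e$. The directional derivative $v=\partial_e u$ is harmonic in $\Omega$. On a convex domain, $u$ is known to be $\tfrac12$-concave (Kennington, Korevaar), so its level sets are convex. I would try to bound $v$ by comparing $u$ with the slab torsion function. Take the supporting half-space $H=\{y:\ (y-x_0)\cdot\nu<d(x)\}$ containing $\Omega$, where $x_0$ is a nearest point structure at distance $d(x)$ in direction $\nu$. The explicit function $w(y)=\tfrac12 \delta(y)\,(L-\delta(y))$ built from the distance $\delta$ to the supporting hyperplane is not quite the right comparison, since $\Omega$ is not a slab.

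A more robust approach, which I would try first, is the direct variational bound. Write $T(\Omega)=\max_\varphi \big(\int\varphi\big)^2/\int|\nabla\varphi|^2$. Since $T=\int_\Omega u$ and $u$ maximizes, it suffices to show $\int_\Omega u\le\int_\Omega d^2$. This in turn follows from $\int_\Omega|\nabla u|^2=\int_\Omega u$ together with Cauchy--Schwarz against $d$. Integrating by parts with $\Delta d\le 0$ in the distributional sense (concavity of $d$) and $|\nabla d|=1$ gives
\[
\int_\Omega u=\int_\Omega u\,|\nabla d|^2 .
\]
Since $\operatorname{div}(u\nabla d)=\nabla u\cdot\nabla d+u\,\Delta d$ and $u=0$ on $\partial\Omega$, concavity yields $\int_\Omega\nabla u\cdot\nabla d\,dx\ge 0$. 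Better, test the equation $-\Delta u=1$ against $d^2/2$ after Green's formula; the boundary terms vanish because $d=u=0$ on $\partial\Omega$. This gives
\[
\int_\Omega \frac{d^2}{2}=\int_\Omega u\,\Delta\!\Big(\frac{d^2}{2}\Big)\ \ldots
\]
Since $\Delta(d^2/2)=|\nabla d|^2+d\,\Delta d\le 1$ distributionally (using $d\ge0$ and $\Delta d\le0$) and $u\ge0$, I would get
\[
\int_\Omega \frac{d^2}{2}\ \le\ \int_\Omega u\,dx\cdot 1 .
\]
That is the wrong direction, so this route gives a lower bound, not the one we want.

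Hence the real content is the gradient bound $|\nabla u|\le d$, or some equivalent. The main obstacle is establishing this estimate, and I would prove it as follows. For $x\in\Omega$, the ball $B_{d(x)}(x)$ lies in $\Omega$. Concavity of $\sqrt{u}$ together with $u=0$ on $\partial\Omega$ gives control along rays. Alternatively, apply the maximum principle to the $P$-function $P=|\nabla u|^2+2u$, which is subharmonic. Payne's argument on convex domains shows that $P$ attains its maximum at a critical point of $u$, so $|\nabla u|^2\le 2(u_{\max}-u)$.

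Combining this with an upper bound on $u$ via the slab: $u(y)\le \tfrac12\delta_\nu(y)\,(w-\delta_\nu(y))$, with $\delta_\nu$ the distance to a supporting hyperplane. Taking the hyperplane nearest to $x$ gives $\delta_\nu(x)=d(x)$. If this direct chain does not close, I would fall back on citing \cite{Cra2004}, where the estimate $|\nabla u|\le d$ is obtained by a comparison argument on convex domains.
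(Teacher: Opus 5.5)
Your argument rests on the pointwise estimate $\abs{\nabla u(x)}\le d(x,\partial\Omega)$, and this estimate is false. For every smooth convex set, $\nabla u$ is continuous up to $\partial\Omega$, and Hopf's lemma gives $\abs{\nabla u}=\abs{\partial_\nu u}>0$ on $\partial\Omega$, while $d=0$ there. So the inequality fails in a whole neighbourhood of the boundary. Concretely, for the unit disk $u=(1-\abs{x}^2)/4$, $\abs{\nabla u}=\abs{x}/2$ and $d=1-\abs{x}$, so $\abs{\nabla u}\le d$ holds only for $\abs{x}\le 2/3$. The integrated inequality still holds ($\int\abs{\nabla u}^2=\pi/8<\pi/6=\int d^2$), but only because the inner region compensates for the boundary layer.

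The other tools you mention cannot fix this. The $P$-function bound $\abs{\nabla u}^2\le 2(\max u-u)$ is of order $\max u$ near $\partial\Omega$, not of order $d^2$. More generally, no pointwise bound of $\abs{\nabla u}^2$ or of $u$ by a multiple of $d^2$ can hold near $\partial\Omega$, since $u$ vanishes only linearly in $d$. The fallback citation does not help either: \cite{Cra2004} proves the integral inequality (in fact the strict one), not a pointwise gradient bound. Incidentally, your Green's formula step has the wrong sign. The correct identity is $\int_\Omega d^2/2=-\int_\Omega u\,\Delta(d^2/2)$, which together with $\Delta(d^2/2)\le 1$ yields nothing, so even the lower bound you derived there is not established.

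The missing idea is that the bound must be proved at the level of the energy quotient, fibre by fibre along normals to the boundary. The paper proceeds as follows:
\begin{enumerate}
\item It reduces to polytopes by approximation.
\item It splits $\Omega$ into the cells $E_i=\{(x',y):x'\in F_i,\ 0<y<f_i(x')\}$ of points whose nearest boundary point lies on the facet $F_i$, so that $d=y$ on $E_i$.
\item It uses $(\sum_i a_i)^2/\sum_i b_i\le\sum_i a_i^2/b_i$, drops the tangential derivatives, and applies Cauchy--Schwarz in $x'$.
\item On each normal segment it uses $\big(\int_0^s g\big)^2\le\frac{s^3}{3}\int_0^s(g')^2$ for $g(0)=0$. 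Only $g(0)=0$ is needed, so $u\neq 0$ on the ridge is harmless.
\item Since $s^3/3=\int_0^s y^2\,dy$, summing over facets gives exactly $\int_\Omega d^2$.
\end{enumerate}

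If you want to keep your starting identity $T(\Omega)=\int_\Omega\abs{\nabla u}^2$, the correct replacement for the pointwise bound is a dual test field. For every $\sigma\in L^2(\Omega;\mathbb{R}^n)$ with $-\operatorname{div}\sigma=1$ in $\mathcal{D}'(\Omega)$, one has $\int_\Omega\abs{\nabla u}^2=\int_\Omega u=\int_\Omega\sigma\cdot\nabla u\le\|\sigma\|_{L^2}\|\nabla u\|_{L^2}$, hence $T(\Omega)\le\int_\Omega\abs{\sigma}^2$. On a polytope, take $\sigma=(f_i(x')-y)\,e_n$ on each $E_i$, in the coordinates adapted to $F_i$. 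This field vanishes on the ridge, so its divergence has no singular part, and $\int_\Omega\abs{\sigma}^2=\sum_i\int_{F_i}f_i^3/3=\int_\Omega d^2$. This is just the dual form of the paper's argument.
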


\begin{proof}
   We will give the proof in the case of $\Omega$ being a convex polytope, which is defined as the convex hull of finitely many points in $\mathbb R^n$. The proof for a general open, bounded convex set will follow by approximation arguments (see \cite[Theorem $1.8.16$]{schneider}). Let us assume that the polytope $\Omega$ has $N\in\mathbb N$ facets, denoted by $F_i$, $i=1,..., N$, and let us decompose
   \begin{equation*}
      \Omega= \bigcup_{i=1}^N E_i,
   \end{equation*}
   where $E_i$ is a connected component of $\Omega$, where the map $x\to d(x,\partial\Omega)$ is differentiable.\\
   If $v$ is the torsion function in $\Omega$, then denoting by $\nabla_{x'}v= (\partial_1v,...,\partial_{n-1}v)$, where  $x'=(x_1,...,x_{n-1})$ and by $\partial_kv= \frac{\partial v}{\partial x_k}$, $k=1,...,n$, then by Cauchy's inequality we obtain
   \begin{equation*}
       T(\Omega) =\frac{\left(\displaystyle{\sum_{i=1}^N\int_{E_i} v \, dx }\right)^2}{\displaystyle{\sum_{i=1}^N\int_{E_i}(\abs{\nabla_{x'} v}^2+(\partial_nv)^2) \, dx }}\le \sum_{i=1}^N\frac{\displaystyle{\left(\int_{E_i} v \, dx \right)^2}}{\displaystyle{\int_{E_i}(\abs{\nabla_{x'} v}^2+(\partial_nv)^2) \, dx }}.
   \end{equation*}
   Without loss of generality, we can assume that $F_i$ lies on the $\mathbb R^{n-1}$ hyperplane, so that $E_i$ can be parametrized as follows
   \begin{equation*}
       E_i = \{(x',y): x'\in F_i, 0<y<f_i(x')\},\qquad i=1,...,N,
   \end{equation*}
   where $f_i:x'\in F_i\to \mathbb R$ parametrizes $\partial E_i \cap \Omega$. Clearly $F_i = \overline{E_i} \cap \{y=0\}$. In this way
   \begin{equation*}
       T(E_i) := \frac{\displaystyle{\left(\int_{E_i} v \, dx \right)^2}}{\displaystyle{\int_{E_i}(\abs{\nabla_{x'} v}^2+(\partial_nv)^2) \, dx }}\le \frac{\displaystyle{\left(\int_{E_i} v \, dx \right)^2}}{\displaystyle{\int_{E_i}(\partial_nv)^2 \, dx }}=\frac{\displaystyle{\left(\int_{F_i}dx'\int_0^{f_i(x')} v \, dx_n \right)^2}}{\displaystyle{\int_{F_i}dx'\int_0^{f_i(x')}(\partial_nv)^2 \, dx_n }}. 
   \end{equation*}
   Now, using Cauchy-Schwarz inequality and the following inequality (see \cite[equation $(7)$]{makai})
   \begin{equation*}
       \frac{\displaystyle{\bigg(\int_0^sg(t)\,dt\bigg)^2}}{\displaystyle{\int_0^s(g'(t))^2\,dt}}\le \frac{s^3}{3}
   \end{equation*}
   valid for any function $g\in C^1([0,s])$, such that $g(0)=0$, we get
   \begin{equation*}
       T(E_i) \le \int_{F_i}\frac{\displaystyle{\left(\int_0^{f_i(x')} v \, dx_n \right)^2}}{\displaystyle{\int_0^{f_i(x')}(\partial_nv)^2 \, dx_n }}dx'\le \int_{F_i}\frac{f_i(x')^3}{3}\,dx'= \int_{F_i}\int_0^{f_i(x')}y^2\,dydx'= \int_{E_i}d(x,\partial \Omega)^2\,dx .
   \end{equation*}
   Eventually
   \begin{equation*}
       T(\Omega) \le \sum_{i=1}^NT(E_i)\le\sum_{i=1}^N\int_{E_i}d(x,\partial \Omega)^2\,dx  = \int_{\Omega}d(x,\partial \Omega)^2\,dx .
   \end{equation*}
   
\end{proof}

\begin{oss}
\label{crasta<}
    For the sake of completeness, with a suitable adaptation of this procedure, Crasta in \cite{Cra2004} was able to prove that for any $\Omega \in \mathcal K_n$, it holds
\[
T(\Omega) < \int_{\Omega} d(x,\partial\Omega)^2\, dx.
\]
\end{oss}

\subsection{Other geometrical quantities}
In order to study the maximizing sequences of $\mathcal F$, we here introduce the following geometric quantities mentioned before in the Introduction

\begin{equation*}
\alpha(\Omega):=\frac{w_\Omega}{\diam(\Omega)}, \qquad \text{and}  \qquad \beta(\Omega) = \frac{P(\Omega)R_\Omega}{\abs{\Omega}}-1, \qquad \text{and} \qquad   \gamma(\Omega):=n-\frac{P(\Omega)R_\Omega}{\abs{\Omega}}.
\end{equation*}
We stress that $\beta(\Omega)$ and $\gamma(\Omega)$ are positive being for any $\Omega \in \mathcal K_n$ (see for instance \cite{AGS} and the references therein)
\[
1<\frac{P(\Omega)R_\Omega}{\abs{\Omega}}\le n
\]
Let $\{\Omega_\ell\}_{\ell\in \mathbb N}\subset\mathcal{K}_n$. We say that $\Omega_\ell$ is a sequence of thinning domains if
    \begin{equation}
\alpha(\Omega_\ell)\to0, \text{ for } \ell \to \infty.
    \end{equation}
\noindent In particular, we will call $\Omega_\ell$ a sequence of thinning cylinders if 
    \begin{equation}\label{thin_rect}
    \Omega_\ell = C \times \left[-\frac{1}{2\ell}, \frac{1}{2\ell}\right],
\end{equation}
    where $C\in \mathcal{K}_{n-1}$. Moreover, in the case $n=2$, the sequence \eqref{thin_rect} is called sequence of thinning rectangles.
We recall the following estimate, which is proved in \cite{fenchel_bonnesen} in the planar case and is generalized in \cite{DBN,Kova} to all dimensions.\\
As already mentioned in the Introduction, in \cite{AGS}, we proved that there exists a constant $C_0(n)=C_0$, such that
\begin{equation*}
    \beta(\Omega)\ge C_0\alpha(\Omega).
\end{equation*}
This means that every minimizing sequence of $\beta(\Omega)$ is a thinning domain, but not viceversa. Counterexamples can be given by sequences of flattening cones or pyramids. Considering $\gamma(\Omega)$, instead, it turns out that there is no strict relation between it and $\alpha(\Omega)$ or $\beta(\Omega)$. Indeed $\gamma(\Omega)= 0$ on tangential bodies and, therefore, not necessarily thinning, as for instance a ball.

\section{Proof of the Makai inequality}
\label{sec3}

This section is devoted to the proof of the Makai inequality in higher dimensions. We also emphasize the sharpness of this inequality.

\begin{proof}[Proof of Theorem \ref{thm:main}]
    In the sequel we will denote by $L(t)=P(t)^{\frac{1}{n-1}}$ and by $L\equiv L(0)=P(\Omega)^{\frac{1}{n-1}}$. Since $L(t)$ is concave, there exists a linear function $\lambda : t\in [0,R_\Omega]\to \mathbb R$, defined as
    \begin{equation*}
        \lambda(t) = L-at,\qquad \qquad a \in\mathbb R
    \end{equation*}
    such that
    \begin{equation}\label{eq:fixedmeasure}
        \abs{\Omega}= \int_0^{R_\Omega}L(t)^{n-1}=\int_0^{R_\Omega}\lambda(t)^{n-1}. 
    \end{equation}
    If we integrate \eqref{eq:fixedmeasure} by parts, i.e.
    \begin{equation*}
        \abs{\Omega}= \int_0^{R_\Omega}(L-at)^{n-1}\,dt = -\frac{(L-at)^n}{an}\bigg|_0^{R_\Omega},
    \end{equation*}
    we get a condition on $a$, that is
    \begin{equation}\label{eq:measureomegaa}
        \abs{\Omega}= \frac{L^n}{an}\bigg(1-\bigg(1-\frac{aR_\Omega}{L}\bigg)^n\bigg).
    \end{equation}
    If we consider the continuous function $f:\mathbb R \to \mathbb R$, defined as
\begin{equation*}
    f(a) = \frac{L^n}{an}\bigg(1-\bigg(1-\frac{aR_\Omega}{L}\bigg)^n\bigg)-\abs{\Omega}.
\end{equation*}
By the factorization valid for any $n\in\mathbb N$
\begin{equation*}
    b^n-c^n= (b-c)(b^{n-1}+b^{n-2}c+\cdots +bc^{n-2}+c^{n-1}),
\end{equation*}
we can rewrite the function as
\begin{equation*}
    f(a) = \frac{P(\Omega)R_\Omega}{n}\bigg[1+\bigg(1-\frac{aR_\Omega}{L}\bigg)+\cdots +\bigg(1-\frac{aR_\Omega}{L}\bigg)^{n-1}\bigg]-\abs{\Omega}.
\end{equation*}
From this form, it is possible to see that for $a<0$ we get
\begin{equation*}
    f(a) > P(\Omega)R_\Omega-\abs{\Omega} \ge 0,
\end{equation*}
while for $a> L/R_\Omega$
\begin{equation*}
    f(a) <\frac{P(\Omega)R_\Omega}{n}-\abs{\Omega}\le 0.
\end{equation*}
Moreover
\begin{equation*}
    \lim_{a\to 0^+}f(a)= P(\Omega)R_\Omega-\abs{\Omega}\ge 0,
\end{equation*}
and
\begin{equation*}
    f(L/R_\Omega) = \frac{P(\Omega)R_\Omega}{n}-\abs{\Omega}\le 0.
\end{equation*}
Therefore, by the zeros Theorem, there exists at least a $\Bar{a}\in [0, L/R_\Omega]$ such that $f(\Bar{a})=0$. This means that the only possible zeros of $f(\cdot)$ can be found in the interval $[0,L/R_\Omega]$, and only in this interval the the condition \eqref{eq:fixedmeasure} is well--posed. \\
Summarizing the properties of  $L(t)$ and $\lambda(t)$, we have that
\begin{enumerate}
\item $L(0)=\lambda(0)$;
    \item $L(t)$ is concave and $\lambda(t)$ is linear;
    \item $L(R_\Omega)\le \lambda (R_\Omega)$: in fact if by contradiction $L(R_\Omega)>\lambda(R_\Omega)$, then we would have $\|L\|_{L^{n-1}(0,R_\Omega)}> \|\lambda\|_{L^{n-1}(0,R_\Omega)}$, which is not possible since condition \eqref{eq:fixedmeasure} holds.
\end{enumerate} 
All these facts ensures that there exist a unique $\beta \in [0,R_\Omega]$ such that
\begin{align*}
    &L(t)-\lambda(t)\ge 0, \qquad\qquad t\in [0,\beta],\\
    &L(t)-\lambda(t)\le 0, \qquad\qquad t\in [\beta,R_\Omega].
\end{align*}
This implies that
\begin{equation*}
    \int_{0}^{R_\Omega}[L(t)^{n-1}-\lambda(t)^{n-1}]t^2\,dt \le \int_{0}^{\beta}[L(t)^{n-1}-\lambda(t)^{n-1}]\beta^2\,dt+\int_{\beta}^{R_\Omega}[L(t)^{n-1}-\lambda(t)^{n-1}]\beta^2\,dt= 0,
\end{equation*}
and therefore
\begin{equation}\label{eq:Llesslambda}
    \int_0^{R_\Omega}L(t)^{n-1}t^2\,dt\le \int_0^{R_\Omega}\lambda(t)^{n-1}t^2\,dt.
\end{equation}
Now, using proposition \ref{prop:TlessD} and applying Coarea formula, we have
\begin{equation*}
    T(\Omega)\le \int_{\Omega}d(x,\partial \Omega)^2\,dx = \int_0^{R_\Omega}P(t)t^2\,dt = \int_{0}^{R_\Omega}L(t)^{n-1}t^2\,dt.
\end{equation*}
Therefore, by \eqref{eq:Llesslambda}, we get
\begin{equation}\label{eq:Tlesslambda}
    T(\Omega) \le \int_{0}^{R_\Omega}\lambda(t)^{n-1}t^2\,dt.
\end{equation}
We now need to estimate integral \eqref{eq:Tlesslambda}. Let us stress that $\lambda(t) = L-at$, where $a \in [0,L/R_\Omega]$. Integrating by parts twice, we get
\begin{equation*}
  \begin{split}
  \int_{0}^{R_\Omega}&\lambda(t)^{n-1}t^2\,dt= \int_{0}^{R_\Omega}(L-at)^{n-1}t^2\,dt =\\
  &-\frac{(L-aR_\Omega)^nR_\Omega^2}{an}-2\frac{(L-aR_\Omega)^{n-1}R_\Omega}{a^2n(n+1)}+2\frac{L^{n+2}-(L-aR_\Omega)^{n+2}}{a^3n(n+1)(n+2)}=\\
  &\frac{2L^{n+2}}{a^3n(n+1)(n+2)}\bigg[-\frac{(n+1)(n+2)}{2}\bigg(1-\frac{aR_\Omega}{L}\bigg)^n \frac{a^2R_\Omega^2}{L^2}+\\
  &\qquad\qquad \qquad-(n+2)\bigg(1-\frac{aR_\Omega}{L}\bigg)^{n+1} \frac{aR_\Omega}{L}+ \bigg(1-\bigg(1-\frac{aR_\Omega}{L}\bigg)^{n+2} \bigg)\bigg].
  \end{split} 
\end{equation*}
From now on, we will change variable, writing
\begin{equation*}
    y= \frac{aR_\Omega}{L},     \qquad y \in [0,1].
\end{equation*}
Therefore
\begin{equation*}
    \int_{0}^{R_\Omega}\lambda(t)^{n-1}t^2\,dt= \frac{2L^{n+2}}{a^3n(n+1)(n+2)}\bigg[-\frac{(n+1)(n+2)}{2}(1-y)^ny^2-(n+2)(1-y)^{n+1}y+(1-(1-y)^{n+2})\bigg].
\end{equation*}
We want to subtract to this quantity, the conjectured optimal constant, that is 
\begin{equation*}
    \frac{2n^2}{(n+1)(n+2)}\frac{\abs{\Omega}^3}{P(\Omega)^2}.
\end{equation*}
Now, by \eqref{eq:measureomegaa}, we have that
\begin{equation*}
    \abs{\Omega}= \frac{L^n}{an}(1-(1-y)^n),
\end{equation*}
therefore
\begin{equation*}
     \frac{2n^2}{(n+1)(n+2)}\frac{\abs{\Omega}^3}{P(\Omega)^2}= \frac{2L^{n+2}}{a^3n(n+1)(n+2)}(1-(1-y)^n)^3.
\end{equation*}
Eventually 
\begin{equation}
\label{pointquant}
\begin{split}
    &T(\Omega)-\frac{2n^2}{(n+1)(n+2)}\frac{\abs{\Omega}^3}{P(\Omega)^2}\le\\
    &\int_{\Omega} d(x, \partial \Omega)^2 \, dx-\frac{2n^2}{(n+1)(n+2)}\frac{\abs{\Omega}^3}{P(\Omega)^2}\le\\
    &\frac{2L^{n+2}}{a^3n(n+1)(n+2)}\bigg[-\frac{(n+1)(n+2)}{2}(1-y)^ny^2+\\
    &-(n+2)(1-y)^{n+1}y
    +(1-(1-y)^{n+2})-(1-(1-y)^n)^3\bigg].    
\end{split}
\end{equation}
If we prove that the quantity in the squared brackets is non-positive, we have finished. Let us denote by $z=1-y$, $z\in[0,1]$ and let us denote by 
\begin{equation*}
    g(z) = -\frac{(n+1)(n+2)}{2}z^n(1-z)^2-(n+2)z^{n+1}(1-z)
    +(1-z^{n+2})-(1-z^n)^3
\end{equation*}
the quantity inside the squared brackets. After straightforward computation, we get
\begin{equation*}
    g(z) = \frac{z^n}{2}[2z^{2n}-6z^n-n(n+1)z^2+2n(n+2)z-(n^2+3n-4)].
\end{equation*}
Denoting by
\begin{equation*}
    h(z) = 2z^{2n}-6z^n-n(n+1)z^2+2n(n+2)z-(n^2+3n-4),
\end{equation*}
the term in the square brackets, we notice that $h(0)<0$ and $h(1)=0$. Moreover computing the first and second derivative of $h$, we get
\begin{align*}
    &h'(z) = 4nz^{2n-1}-6nz^{n-1}-2n(n+1)z+2n(n+2),\\
    &h''(z) = 4n(2n-1)z^{2n-2}-6n(n-1)z^{n-2}-2n(n+1). 
\end{align*}
We stress that $h'(0)>0$, $h''(0)<0$ and $h'(1)=h''(1)=0$.  Computing the third derivative, we have
\begin{equation*}
h'''(z) = z^{n-3}2n(n-1)\bigg[4(2n-1)z^n-3(n-2)\bigg],
\end{equation*}
and
\begin{equation*}
    h'''(z)\le 0 \qquad \iff\qquad 0\le z\le \Tilde{z}= \bigg(\frac{3(n-2)}{4(2n-1)}\bigg)^{\frac1n}\in [0,1).
\end{equation*}
This implies that $\sup_{[0,1]}h''(z)= \sup\{h''(0),h''(1)\}=h''(1)=0$. As a consequence $h''(z)\le 0$ and therefore $h'(z)$ is decreasing. In turns this implies that $\inf_{[0,1]}h'(z)=h'(1)=0$, which gives $h'(z)>0 $ for all $z\in [0,1]$. This implies that $\sup_{[0,1]}h(z)= h(1)=0$ and therefore $h(z)\le 0$.

As noted in the Introduction, the sharpness of the inequality has been already proved in \cite{buttazzo2020convex}, where they prove that sequences of cones reach the desired constant. To make the paper self contained we prove it making a direct computation. 
Let $\Omega_k$ be a sequence of collapsying cones, i.e.
    \begin{equation*}
        \Omega_k = \{(x,y)\in \mathbb R^{n-1}\times\mathbb R: x\in B_1^{n-1}, \; 0\le y \le k^{-1}(1-\abs{x})\},
    \end{equation*}
    where $B_1^{n-1}$ is the $(n-1)$-dimensional  unit ball centered at the origin. For $k$ large enough, $\{\Omega_k\}_{k\in \mathbb N}$ is a sequence of thinning domains, so that the torsion acquires a nice form approximated at the first order (see \cite{BFreitas, buttazzo2020convex}), that is
    \begin{equation*}
        T(\Omega_k) \sim \frac{1}{12k^3}\int_{B_1^{n-1}}(1-\abs{x})^3\,dx.
    \end{equation*}
    Applying Coarea Formula and integrating by parts  three times we get
    \begin{equation*}
     \int_{B_1^{n-1}}(1-\abs{x})^3\,dx= (n-1)\omega_{n-1}\int_0^1(1-t)^3t^{n-2}\,dt = \frac{6\omega_{n-1}}{n(n+1)(n+2)}.  
    \end{equation*}
    Therefore
    \begin{equation*}
        T(\Omega_k) \sim \frac{\omega_{n-1}}{2n(n+1)(n+2)k^3}.
    \end{equation*}
    Moreover we have that
    \begin{equation*}
        P(\Omega_k) \sim 2|B_1^{n-1}| = 2\omega_{n-1}
    \end{equation*}
    and
    \begin{equation*}
        |\Omega_k|= \frac{1}{k}\int_{B_1^{n-1}}(1-\abs{x})\,dx = \frac{\omega_{n-1}}{nk}.
    \end{equation*}
    This means that for $k\to +\infty$, we have
    \begin{equation*}
        \frac{T(\Omega_k)P(\Omega_k)^2}{\abs{\Omega_k}^3}\sim \frac{\omega_{n-1}}{2n(n+1)(n+2)k^3}\frac{4(\omega_{n-1})^2}{(\omega_{n-1})^3}n^3k^3 = \frac{2n^2}{(n+1)(n+2)},
    \end{equation*}
    proving the sharpness of the inequality. 
\end{proof}
    
    \section{On the optimizing sequences}

    \label{sec4}

In this section, we turn our attention to the stability question, which can be formulated as follows:
is it true that
\begin{center}
    \emph{
    if $\mathcal{F}(\Omega)$ is close to $2n^2/((n+1)(n+2))$
    for a convex set $\Omega$, then in some sense $\Omega$ is
    a thin tangential body?
    }
\end{center}
This kind of question is not new: the same problem was posed by Crasta in \cite{Cra2004}, although no formal proof was provided there. Similar questions were also considered for the lower bound of the same functional in \cite{AGS,AMPS}, and for the first nontrivial Neumann eigenvalue in \cite{ABF,ABF2}.\\
\noindent Below we exploit the proof of the Makai inequality given in the previous sections, looking for room for improvement. In particular, the inequality is obtained via the following chain of inequalities:
\begin{equation*}
\mathcal{F}(\Omega) \leq \frac{ P(\Omega)^2}{\abs{\Omega}^3}\int_{\Omega}d(x,\partial\Omega)^2\, dx  
\leq \frac{2n^2}{(n+1)(n+2)}.
\end{equation*}
It turns out that our question can be split into two different ones: what is a necessary condition for which 
\(
\mathcal{F}(\Omega)\)
is close to 
\(
\frac{ P(\Omega)^2}{\abs{\Omega}^3}\int_{\Omega}d(x,\partial\Omega)^2\, dx,
\)
and moreover, what is a necessary condition for which 
\(
\frac{ P(\Omega)^2}{\abs{\Omega}^3}\int_{\Omega}d(x,\partial\Omega)^2\, dx
\)
is close to 
\(
\frac{2n^2}{(n+1)(n+2)}?
\)

As for the first question, we provide a partial affirmative answer, which is not quantitative but only asymptotic: it states, roughly speaking, that whenever 
\(\mathcal{F}(\Omega)\)
approaches 
\[
\frac{ P(\Omega)^2}{\abs{\Omega}^3}\int_{\Omega}d(x,\partial\Omega)^2\, dx,
\]
the convex set $\Omega$ tends to become thin. It can be stated as in Theorem \ref{thm:supremum}.

\begin{proof}[Proof
    of Theorem \ref{thm:supremum}]Let us assume without lost generality that $\diam (\Omega_k)=1$ and 
    $$ \lim_k \left(\int_{\Omega_k}d(x,\partial \Omega_k)^2 \,dx- T(\Omega_k)\right)\frac{P^2(\Omega_k)}{\abs{\Omega_k}^3} \leq \lim_k \left(\frac{2n^2}{(n+1)(n+2)} - T(\Omega_k)\frac{P^2(\Omega_k)}{\abs{\Omega_k}^3}\right)=0 ,$$
        and let us suppose by contraddiction that $$\limsup_k \frac{w(\Omega_k)}{\diam(\Omega_k)} = c >0.$$ 
        Then, up to a subsequence, that we will always indicate by $\{\Omega_k\}_k$, we have
    $$\lim_k \frac{w(\Omega_m)}{\diam(\Omega_k)} = c >0.$$
    Therefore $\frac{w(\Omega_k)}{\diam(\Omega_k)} > c/2$ for all $k \in \mathbb N$. Since $R(\Omega_k) \geq c_n w(\Omega_k) \geq c_n \frac{c}{2}$, then every
     $\Omega_k \supset B_{c_n \frac{c}{2}}$. Moreover, we know that
     \begin{equation*}
         \abs{\Omega} \ge \frac{R_{\Omega}}{n}P(\Omega),
     \end{equation*}
     
     and that \begin{equation*}
         \abs{\Omega}\le |B_{\diam(\Omega)/2}|= \frac{\omega_n}{2^n}\diam(\Omega)^n.
     \end{equation*}
     Then, the diameter constraint and the previous two inequalities imply
     \begin{equation*}
        P(\Omega_k) \le \frac{n\omega_n}{2^nR_{\Omega_k}}\le \frac{n\omega_n}{2^{n-1}c_nc}<+\infty \qquad \forall k\in \mathbb N.
     \end{equation*}
    Therefore the sequence $\{\Omega_k\}_k$ is equibounded and we can apply the Blaschke selection theorem (see    \cite[theorem 1.8.6]{schneider}). Hence, there exists a subsequence of $\Omega_k$ converging to a set $\tilde \Omega$ in the Hausdorff sense.
By the continuity of the Torsion, the perimeter and the volume with respect to the Hausdorff distance, we have  
    $$ \lim_k \left(\int_{\Omega_k}d(x,\partial \Omega_k)^2 \, dx - T(\Omega_k)\right)\frac{P^2(\Omega_k)}{\abs{\Omega_k}^3}= 0= \left(\int_{\tilde\Omega}d(x,\partial \tilde \Omega)^2 \, dx - T(\tilde \Omega)\right)\frac{P^2(\tilde \Omega)}{\abs{\tilde \Omega}^3}.$$
This is a contradiction with Remark \ref{crasta<}.

    \end{proof}

As for the second question, we show that 
\[
\frac{ P(\Omega)^2}{\abs{\Omega}^3}\int_{\Omega}d(x,\partial\Omega)^2\, dx
\]
is close to $\frac{2n^2}{(n+1)(n+2)}$ if and only if $\Omega$ is close to a tangential body, that is, the quantity
\[
\gamma(\Omega) =  n-\frac{P(\Omega)R_\Omega}{\abs{\Omega}}
\]
is small. These informations can be stated in the followig proposition.

\begin{prop}\label{prop:quantitative}
    Let $n\ge 2$ and let $\Omega \in \mathcal{K}_n$. Then,
    \begin{equation*}
        C_1\gamma(\Omega)^n\le \frac{2n^2}{(n+1)(n+2)}-\frac{P(\Omega)^2}{\abs{\Omega}^3} \int_{\Omega}d(x,\Omega)^2\, dx\le C_2\gamma(\Omega),
    \end{equation*}
      where
    \begin{equation*}
        C_1(n) = \frac{n^2+3n-4}{n(n+1)(n+2)(n-1)^{n}} \qquad \text{ and } \qquad C_2(n) = \frac{6n}{(n+1)(n+2)}. 
    \end{equation*}
\end{prop}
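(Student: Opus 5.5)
The plan is to read off both inequalities from the proof of Theorem \ref{thm:main}. The lower bound comes from the exact expression \eqref{pointquant} (in which only the step $T\le\int d^2$ is dropped). The upper bound comes from the volume estimate of Proposition \ref{prop:concavitymeasure}. Throughout, write $\mathcal D(\Omega)=\frac{P(\Omega)^2}{|\Omega|^3}\int_\Omega d(x,\partial\Omega)^2dx$.

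\emph{Upper bound.} By Fubini and the layer-cake formula, $\int_\Omega d^2\,dx=\int_0^{R_\Omega}2t\,\mu(t)\,dt$. Inserting $\mu(t)\ge (1-t/R_\Omega)^n|\Omega|$ from \eqref{eq:concavity} and using the Beta integral gives
\[
\int_\Omega d^2\,dx\ge 2|\Omega|R_\Omega^2\int_0^1 s(1-s)^n\,ds=\frac{2|\Omega|R_\Omega^2}{(n+1)(n+2)}.
\]
Hence $\mathcal D(\Omega)\ge \frac{2}{(n+1)(n+2)}\big(\frac{P R_\Omega}{|\Omega|}\big)^2=\frac{2(n-\gamma)^2}{(n+1)(n+2)}$. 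Therefore
\[
\frac{2n^2}{(n+1)(n+2)}-\mathcal D(\Omega)\le \frac{2(2n\gamma-\gamma^2)}{(n+1)(n+2)}\le \frac{4n}{(n+1)(n+2)}\gamma\le C_2\gamma .
\]

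\emph{Lower bound.} I return to \eqref{pointquant} with $z=1-aR_\Omega/L$. By \eqref{eq:measureomegaa}, $|\Omega|=\frac{L^n}{an}(1-z^n)$, and $P^2=L^{2n-2}$. Multiplying \eqref{pointquant} by $P^2/|\Omega|^3$ therefore gives exactly
\[
\frac{2n^2}{(n+1)(n+2)}-\mathcal D(\Omega)\ \ge\ \frac{2n^2}{(n+1)(n+2)}\cdot\frac{-g(z)}{(1-z^n)^3}=\frac{n^2}{(n+1)(n+2)}\cdot\frac{z^n(-h(z))}{(1-z^n)^3}.
\]
Next I link $z$ to $\gamma$. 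From the factorized form of $f$, $|\Omega|=\frac{PR_\Omega}{n}S(z)$ with $S(z)=1+z+\dots+z^{n-1}$. Hence $\frac{PR_\Omega}{|\Omega|}=\frac{n}{S(z)}$ and $\gamma=\frac{n(S-1)}{S}$. I will show $\gamma\le (n-1)z$. This is equivalent to $n(S-1)\le (n-1)zS$, i.e. $\sum_{k=1}^{n-1}z^k\le (n-1)z^n+\frac{n-1}{n}(\dots)$. I expect it to follow from the convexity of $k\mapsto z^k$ (Chebyshev/AM--GM), with equality at $z=0,1$. This gives $z^n\ge (\gamma/(n-1))^n$.

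\emph{Main obstacle.} The remaining one-variable inequality is
\[
\frac{-h(z)}{(1-z^n)^3}\ \ge\ c_n\qquad z\in[0,1),
\]
with $c_n$ comparable to $-h(0)=n^2+3n-4$, giving $C_1$ after collecting constants. Near $z=1$ both sides vanish to third order: $h(1)=h'(1)=h''(1)=0$. So the ratio stays positive and tends to $-h'''(1)/(6n^3)$. I would first try to prove $-h(z)\ge (n^2+3n-4)(1-z)^3\ge \frac{n^2+3n-4}{n^3}(1-z^n)^3$ (or a similar clean comparison). I would do this via Taylor expansion of $h$ at $1$ with integral remainder and the sign analysis of $h'''$ from the proof of Theorem \ref{thm:main}; that analysis shows $h'''$ changes sign only once, at $\tilde z$. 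If this crude route loses powers of $n$ relative to the stated $C_1$, I would instead bound $\frac{-h(z)}{(1-z^n)^3}$ directly by monotonicity in $z$, comparing endpoint values.
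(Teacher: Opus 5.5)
Your bound $K-\mathcal D\le C_2\gamma$, with $K=\frac{2n^2}{(n+1)(n+2)}$, is correct, and it takes a different route from the paper. You insert $\mu(t)\ge(1-t/R_\Omega)^n|\Omega|$ into $\int_\Omega d^2=\int_0^{R_\Omega}2t\mu(t)\,dt$, which gives $\mathcal D\ge\frac{2(n-\gamma)^2}{(n+1)(n+2)}$ and the better constant $\frac{4n}{(n+1)(n+2)}$. The paper instead integrates $P(t)\ge(1-t/R_\Omega)^{n-1}P(\Omega)$ against $t^2$, which yields only $\mathcal D\ge\frac{2(n-\gamma)^3}{n(n+1)(n+2)}$ and the constant $6n$.

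The gap is in the $C_1$ half: your claim $\gamma\le(n-1)z$ is false. From your own (correct) identity $\gamma=n(S-1)/S$, as $z\to0^+$ one has $S\to1$ and $S-1\sim z$, so $\gamma\sim nz>(n-1)z$. For $n=2$, $\gamma=2z/(1+z)>z$ on all of $(0,1)$. What does hold is $S-1\le(n-1)z$ and $\gamma\le nz$, the latter because $(1-z)S=1-z^n\le1$. The problem is structural, not just a wrong lemma. Once you discard $S^3$ via $(1-z^n)^3\le n^3(1-z)^3$, your intermediate bound $\frac{n^2+3n-4}{n(n+1)(n+2)}z^n$ is itself smaller than $C_1\gamma^n$. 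For $n=2$ it equals $z^2/4<\gamma^2/4$ for every $z\in(0,1)$, and for every $n$ it is smaller as $z\to0^+$. So no estimate of $z$ in terms of $\gamma$ can reach $C_1$; using $\gamma\le nz$ you only get $C_1\left(\frac{n-1}{n}\right)^n\gamma^n$. The paper's last step has the same flaw in disguise. It proves $z^n\ge\tilde\gamma^n/(n-1)^n$ for $\tilde\gamma=\frac{n|\Omega|}{P(\Omega)R_\Omega}-1=S-1$ and then uses $\tilde\gamma\ge\gamma$. But $\tilde\gamma=\frac{|\Omega|}{P(\Omega)R_\Omega}\gamma\le\gamma$, so that inequality goes the wrong way.

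The stated $C_1$ is reachable if you keep $S^3$. The bound $-h(z)\ge(n^2+3n-4)(1-z)^3$ is true, but not via the sign pattern of $h'''$, which is negative on $[0,\tilde z)$ for $n\ge3$. Instead, $H=-h/(1-z)^3$ is a polynomial with positive coefficients, which one sees by expanding $-h(z)(1-z)^{-3}$, so $H\ge H(0)$. Then use $1-z^n=(1-z)S$ and $\gamma=nzT/S$, where $T=1+z+\dots+z^{n-2}$:
\[
K-\mathcal D\ \ge\ \frac{n^2(n^2+3n-4)}{(n+1)(n+2)}\,\frac{z^n}{S^3}\ =\ C_1\gamma^n\,\frac{(S/n)^{n-3}}{\big(T/(n-1)\big)^n}.
\]
The last fraction is at least $1$ because $\Phi(z)=n\log T-(n-3)\log S$ is nondecreasing on $[0,1]$ and equals $n\log(n-1)-(n-3)\log n$ at $z=1$. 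To see the monotonicity, write $z\Phi'=3E_{n-1}-(n-3)(E_n-E_{n-1})$, where $E_m$ is the mean of $k$ under weights $z^k$, $0\le k\le m-1$. One has $E_{n-1}\ge\frac{(n-1)(n-2)}{2}\frac{z^{n-1}}{T}$ and $0\le E_n-E_{n-1}\le\frac{(n-1)z^{n-1}}{T}$, which gives $z\Phi'\ge0$. A minor point: the limit of your ratio at $z=1$ is $h'''(1)/(6n^3)>0$, not $-h'''(1)/(6n^3)$.
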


\begin{proof}
   We start from \eqref{pointquant} in the qualitative proof, which can be rewritten as
   
\begin{equation}\label{eq:firststepquantitative}
\int_{\Omega} d(x,\partial\Omega)^2\, dx-\frac{2n^2}{(n+1)(n+2)}\frac{\abs{\Omega}^3}{P(\Omega)^2}\le
    \frac{2L^{n+2}}{a^3n(n+1)(n+2)}g(z),
\end{equation}
where
\begin{equation*}
    g(z) = \frac{z^n}{2}h(z),
\end{equation*}
and $h(z)$ defined above which is negative. In particular we have seen that $z=1$ is a zero of order $3$ for $h$, therefore we can write
\begin{equation*}
    h(z) = -(1-z)^3H(z),
\end{equation*}
for some function $H(z)$. In particular 
\begin{equation*}
    H(z) = -\frac{h(z)}{(1-z)^3}
\end{equation*}
and it is possible to see that
\begin{equation*}
    H'(z) = \frac{-h'(z)(1-z)-3h(z)}{(1-z)^4}>0.
\end{equation*}
This means that 
\begin{equation*}
    \inf_{[0,1]}H(z) = H(0) = -h(0)= n^2+3n-4.
\end{equation*}
Therefore
\begin{equation*}
    h(z) \le -(n^2+3n-4)(1-z)^3.
\end{equation*}
and consequently
\begin{equation}\label{eq:estimateg}
    g(z) \le -\frac{n^2+3n-4}{2}(1-z)^3z^n.
\end{equation}
We want to multiply \eqref{eq:firststepquantitative} by $P^2(\Omega)/\abs{\Omega}^3$, where we recall that $P(\Omega) =L^{n-1}$ and $\abs{\Omega}$ is given by 
\begin{equation}\label{eq:measureomegawithz}
    \abs{\Omega}= \frac{L^n}{an}(1-z^n).
\end{equation}
This implies that
\begin{equation}\label{eq:pmz}
    \frac{P(\Omega)^2}{\abs{\Omega}^3}=\frac{a^3n^3}{L^{n+2}(1-z^n)^3}.
\end{equation}
Therefore, after multiplying by the desired quantity and plugging \eqref{eq:pmz} and \eqref{eq:estimateg} into \eqref{eq:firststepquantitative}, we get
\begin{equation*}
    \int_{\Omega} d(x,\partial\Omega)^2\, dx\frac{P(\Omega)^2}{\abs{\Omega}^3}-\frac{2n^2}{(n+1)(n+2)}\le -\frac{n^2(n^2+3n-4)}{(n+1)(n+2)}\cdot\frac{(1-z)^3z^n}{(1-z^n)^3}.
\end{equation*}
Let us stress that since $0\le z\le 1$
\begin{equation*}
    (1-z^n)^3= (1-z)^3(1+z+z^2+\cdots+z^{n-1})^3\le n^3(1-z)^3.
\end{equation*}
Hence
\begin{equation}\label{eq:secondstepquantitative}
   \int_{\Omega} d(x,\partial\Omega)^2\, dx \frac{P(\Omega)^2}{\abs{\Omega}^3}-\frac{2n^2}{(n+1)(n+2)}\le -\frac{n^2+3n-4}{n(n+1)(n+2)} z^n.
\end{equation}
We just need to estimate $z^n$. 

Let us denote by $\gamma(\Omega) \in [0,n-1]$ the remainder term
\begin{equation*}
    \tilde \gamma(\Omega)= \frac{n\abs{\Omega}}{P(\Omega)R_\Omega}-1.
\end{equation*}
Recalling that $1-z= y= aR_\Omega/L$, then
\begin{equation}\label{eq:ainz}
    a = (1-z)\frac{L}{R_\Omega}.
\end{equation}
Recalling \eqref{eq:measureomegawithz} and using \eqref{eq:ainz}, we get
\begin{equation*}
    \tilde \gamma(\Omega)+1 = \frac{1-z^n}{1-z}.
\end{equation*}
But again
\begin{equation*}
   \frac{1-z^n}{1-z}= \frac{(1-z)(1+z+\cdots +z^{n-1})}{1-z}= 1+z+\cdots +z^{n-1},
\end{equation*}
which gives
\begin{equation*}
    \tilde \gamma(\Omega) = z+z^2+\cdots+z^{n-1}.
\end{equation*}
Since $0\le z \le 1$, we have that $z^k\le z$ for all $k\ge 1$ and so
\begin{equation*}
    \tilde \gamma(\Omega)\le (n-1)z. 
\end{equation*}
In particular
\begin{equation}\label{eq:estimatezn}
    z^n \ge \frac{ \tilde \gamma(\Omega)^n}{(n-1)^n}.
\end{equation}
Using \eqref{eq:estimatezn} in \eqref{eq:secondstepquantitative}, we have
\begin{equation*}
\begin{aligned}
    \int_{\Omega} d(x,\partial\Omega)^2\, dx\frac{P(\Omega)^2}{\abs{\Omega}^3}-\frac{2n^2}{(n+1)(n+2)}&\le -\frac{n^2+3n-4}{n(n+1)(n+2)(n-1)^{n}}\tilde \gamma(\Omega)^n \\&\leq -\frac{n^2+3n-4}{n(n+1)(n+2)(n-1)^{n}} \gamma(\Omega)^n ,
\end{aligned}
\end{equation*}
where
$$\gamma(\Omega)= n - \frac{P(\Omega) R_{\Omega}}{\abs{\Omega}}.$$
Consequently, we have \begin{equation*}
   \frac{2n^2}{(n+1)(n+2)}-\int_{\Omega} d(x,\partial\Omega)^2\, dx\frac{P(\Omega)^2}{\abs{\Omega}^3}\ge \frac{n^2+3n-4}{n(n+1)(n+2)(n-1)^{n}}\gamma(\Omega)^n, 
\end{equation*}
which is the desired result.

On the other hand, by \eqref{eq:concavity}
$$\int_{\Omega} d(x,\partial\Omega)^2\, dx = \int_{0}^{R_\Omega} t^2 P(t)\, dt \geq \int_{0}^{R_\Omega} t^2 \left(1- \frac{t}{R_\Omega}\right)^{n-1}P(\Omega)\, dt = \frac{2R_{\Omega}^3(P(\Omega))}{n(n+1)(n+2)}.$$
Hence, we have
\begin{equation*}
\begin{aligned}
    \frac{P(\Omega)^2}{\abs{\Omega}^3}\int_{\Omega} d(x,\partial\Omega)^2\, dx &\geq \frac{2}{n(n+1)(n+2)} \left(n^3+\frac{R^3_{\Omega}P(\Omega)^3}{\abs{\Omega}^3}-n^3\right)\\
    &= \frac{2}{n(n+1)(n+2)} - \frac{2n^2}{(n+1)(n+2)}\left( n^3 -\frac{R^3_{\Omega}P(\Omega)^3}{\abs{\Omega}^3}\right)\\
    &\geq \frac{2n^2}{(n+1)(n+2)} - \frac{6n}{(n+1)(n+2)}\left( n-\frac{R_{\Omega}P(\Omega)}{\abs{\Omega}}\right).
    \end{aligned}
\end{equation*}
\end{proof}

 Theorem \ref{thm:quantitative} is a direct consequence of Proposition \ref{prop:quantitative}, since $T(\Omega) \leq \int_{\Omega} d(x,\partial\Omega)^2\, dx$.

 \begin{proof}[Proof of Theorem \ref{thm:quantitative}]
 Since $T(\Omega) \leq \int_{\Omega} d(x,\partial\Omega)^2\, dx$,
$$ \frac{2n^2}{(n+1)(n+2)}-\mathcal{F}(\Omega) \geq \frac{2n^2}{(n+1)(n+2)}-\frac{P(\Omega)^2}{\abs{\Omega}^3} \int_{\Omega} d(x, \partial \Omega)^2 \, dx \geq C_1\gamma(\Omega)^n$$
 \end{proof}
\noindent Eventually, the proof of Corollary \ref{Corol}, is a direct consequence of Theorems \ref{thm:quantitative} and \ref{thm:supremum}.\\
As last result of this section we show that there is a particular class of flattening sets for which $$\lim_k \left(\int_{\Omega_k}d(x,\partial \Omega_k)^2\,dx - T(\Omega_k)\right)\frac{P^2(\Omega_k)}{\abs{\Omega_k}^3}=0.$$
Indeed we get the following result holds true.

\begin{prop}Let $n\ge 2$ and let $\Omega \in \mathcal{K}_n$. Then,
    $$\left(\int_{\Omega}d(x,\partial \Omega)^2\,dx - T(\Omega)\right)\frac{P^2(\Omega)}{\abs{\Omega}^3}  \leq \frac{n^2+n+1}{3} \beta(\Omega).$$
\end{prop}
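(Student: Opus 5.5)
Since $\beta(\Omega)$ appears linearly, I would combine an upper bound for $\int_\Omega d^2$ with a lower bound for $T(\Omega)$, and express both in terms of $P(\Omega)R_\Omega/\abs{\Omega}$.

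\textbf{Upper bound for $\int_\Omega d^2$.} Since $P(t)\le P(\Omega)$ and $t\le R_\Omega$, the coarea formula gives
\[
\int_\Omega d(x,\partial\Omega)^2\,dx=\int_0^{R_\Omega}t^2P(t)\,dt .
\]
Integrating by parts with $\mu'(t)=-P(t)$, this equals $2\int_0^{R_\Omega}t\,\mu(t)\,dt$. Using $\mu(t)\le\abs{\Omega}$ then yields
\[
\int_\Omega d^2\le R_\Omega^2\abs{\Omega}.
\]

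\textbf{Lower bound for $T(\Omega)$.} I would use the test function $\varphi=d(x,\partial\Omega)$ in the variational definition of $T$. Since $\abs{\nabla d}=1$,
\[
T(\Omega)\ge \frac{\left(\int_\Omega d\right)^2}{\abs{\Omega}}=\frac{\left(\int_0^{R_\Omega}\mu(t)\,dt\right)^2}{\abs{\Omega}} .
\]
By \eqref{eq:dermu}, $\mu(t)\ge \abs{\Omega}-tP(\Omega)$, so I need a linear lower bound for $\int_0^{R_\Omega}\mu$ in terms of $\abs{\Omega}R_\Omega$. A cleaner alternative is the test function $\varphi(x)=\min\{d(x,\partial\Omega),s\}$ with $s$ optimized, or the Pólya-type bound with the constant $1/3$ from \cite{AGS}. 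Since the target constant is $\frac{n^2+n+1}{3}$, I expect the intended route to reuse the upper estimate in \eqref{eq:TPM-PRM}, namely
\[
\frac{T(\Omega)P(\Omega)^2}{\abs{\Omega}^3}\ge \frac13 ,
\]
together with an upper estimate of the form
\[
\frac{P(\Omega)^2}{\abs{\Omega}^3}\int_\Omega d^2\le \frac13+c_n\,\beta(\Omega).
\]

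\textbf{Combining the bounds.} Set $x=P(\Omega)R_\Omega/\abs{\Omega}=1+\beta(\Omega)$. From the first step, the normalized integral is at most $\frac{P^2R_\Omega^2}{\abs{\Omega}^2}=x^2$, which is not small. I would therefore refine via $\mu(t)\le \abs{\Omega}$ and $\mu(t)\le P(\Omega)(R_\Omega-t)$, the latter because $\Omega_t$ has inradius $R_\Omega-t$ and $\abs{\Omega_t}\le P(\Omega_t)(R_\Omega-t)\le P(\Omega)(R_\Omega-t)$. The cross-over point is $t_0=R_\Omega-\abs{\Omega}/P(\Omega)$. Then
\[
2\int_0^{R_\Omega}t\,\mu(t)\,dt\le \abs{\Omega}t_0^2+2P(\Omega)\int_{t_0}^{R_\Omega}t\,(R_\Omega-t)\,dt .
\]
After normalizing by $P^2/\abs{\Omega}^3$ this becomes a polynomial in $\beta$ that equals $1/3$ at $\beta=0$. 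Its slope, bounded using $\beta\le n-1$, should produce $\frac{n^2+n+1}{3}\beta$ after subtracting $\frac13\le \mathcal F$; the factor $n^2+n+1=\frac{n^3-1}{n-1}$ suggests bounding $\frac{(1+\beta)^3-1}{3}$ on $\beta\in[0,n-1]$.

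\textbf{Main obstacle.} The hard part is getting exactly the stated constant from the polynomial bound in the previous step. If the $\mu$-splitting estimate is not sharp enough, I would fall back on using $\int_\Omega d^2\le \frac13 R_\Omega^3P(\Omega)\cdot(\text{correction})$ together with the $1/3$ lower bound for $T$.
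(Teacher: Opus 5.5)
Your plan is correct and proves slightly more than the statement, but you left the decisive computation as ``should produce'' and mis-predicted its outcome, harmlessly. Your route also differs from the paper's in how $\int_\Omega d^2$ is bounded from above.

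\textbf{The paper's route.} The paper uses exactly the observation you open with and then set aside. Inserting $P(t)\le P(\Omega)$ directly into $\int_0^{R_\Omega}t^2P(t)\,dt$ gives $\int_\Omega d^2\le\frac13 P(\Omega)R_\Omega^3$. Normalized, this reads $\frac{P(\Omega)^2}{\abs{\Omega}^3}\int_\Omega d^2\le\frac{(1+\beta)^3}{3}$, so your fallback needs no ``correction'' factor. Subtracting P\'olya's bound $\mathcal F(\Omega)\ge\frac13$ (the right-hand inequality in \eqref{eq:TPM-PRM}, not the upper one) leaves $\frac{\beta(3+3\beta+\beta^2)}{3}$. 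This is at most $\frac{n^2+n+1}{3}\beta$ because $\beta\le n-1$. That is precisely the algebra you reverse-engineered from $n^2+n+1=\frac{n^3-1}{n-1}$.

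\textbf{Your route.} Your splitting is the same estimate, sharpened. The bound $\mu(t)=\int_t^{R_\Omega}P\le P(\Omega)(R_\Omega-t)$ is exactly what $P\le P(\Omega)$ gives, and you add the trivial bound $\mu\le\abs{\Omega}$. Set $s=\abs{\Omega}/P(\Omega)$, so that $R_\Omega=(1+\beta)s$ and $t_0=\beta s$. The computation is
\[
\abs{\Omega}t_0^2+2P(\Omega)\int_{t_0}^{R_\Omega}t(R_\Omega-t)\,dt=\abs{\Omega}s^2\Big(\beta^2+\beta+\frac13\Big),
\]
and $\frac{P(\Omega)^2}{\abs{\Omega}^3}\abs{\Omega}s^2=1$. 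So the normalized integral is at most $\frac13+\beta+\beta^2=\frac{(1+\beta)^3}{3}-\frac{\beta^3}{3}$, not $\frac{(1+\beta)^3}{3}$ as you anticipated. After subtracting $\frac13$ you get $\beta(1+\beta)\le n\beta$, using $\beta\le n-1$. Finally $n\beta\le\frac{n^2+n+1}{3}\beta$, since this is equivalent to $(n-1)^2\ge 0$.

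\textbf{Comparison.} Your route buys the better constant $n$ at the cost of one extra integration; the paper's gives the stated constant in one line. The test function $d$ for $T$ and the bound $R_\Omega^2\abs{\Omega}$ are dead ends you can drop.
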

\begin{proof} Using \eqref{polyatorsion} and the expression of the integral of $d(x,\Omega)^2$, we get
$$\begin{aligned}
    \left(\int_{\Omega}d(x,\partial \Omega)^2\,dx - T(\Omega)\right)\frac{P^2(\Omega)}{\abs{\Omega}^3}  &\leq \int_0^{R_\Omega} t^2 P(t)\, dt \frac{P^2(\Omega)}{\abs{\Omega}^3} -\frac1 3 \\&\leq \frac{R_\Omega^3 P(\Omega)^3}{3\abs{\Omega}^3}  - \frac 1 3\leq  \frac{n^2+n+1}{3} \beta(\Omega). 
\end{aligned}$$
\end{proof}

\section*{Acknowledgments}

The authors were partially supported by Gruppo Nazionale per l’Analisi Matematica, la Probabilità e le loro Applicazioni
(GNAMPA) of Istituto Nazionale di Alta Matematica (INdAM).\\
Rossano Sannipoli was supported by the grant no. 26-21940S
of the Czech Science Foundation.
\bibliographystyle{plain}
\bibliography{biblio}

\Addresses
 
\end{document}